\numberwithin{equation}{section}
\renewcommand{\AA}{\mathbb{A}}
\renewcommand{\d}{\mathrm{d}}
\renewcommand{\H}{\mathrm{H}}
\renewcommand{\L}{\mathrm{L}}
\renewcommand{\O}{\mathcal{O}}
\renewcommand{\phi}{\varphi}
\renewcommand{\upphi}{\upvarphi}
\newcommand{\uppsican}{{\uppsi_\mathrm{can}}}
\newcommand{\Alg}{\mathrm{Alg}}
\newcommand{\Aut}{\operatorname{Aut}}
\newcommand{\B}{\mathrm{B}}
\newcommand{\Brhat}{\mathrm{\hat Br}}
\newcommand{\calX}{\mathcal{X}}
\newcommand{\calEoo}{\mathcal{E}_\infty}
\newcommand{\D}{\mathrm{D}}
\newcommand{\Eoo}{E_\infty}
\newcommand{\Ext}{\operatorname{Ext}}
\newcommand{\GGmhat}{\mathbb{\hat G}_\mathrm{m}}
\newcommand{\Hcris}{\H_{\mathrm{cris}}}
\newcommand{\HdR}{\H_{\mathrm{dR}}}
\newcommand{\id}{\mathrm{id}}
\newcommand{\K}{\mathrm{K}}
\newcommand{\M}{\mathcal{M}_{\mathrm{K3},p}}
\newcommand{\Mtriv}{\M^\mathrm{triv}}
\newcommand{\Mod}{\mathrm{Mod}}
\newcommand{\Mord}{\M^\mathrm{ord}}
\newcommand{\nablaGM}{\nabla_\mathrm{\!GM}}
\newcommand{\ZZ}{\mathbb{Z}}
\newtheorem{proposition}{Proposition}[subsection]
\newtheorem{theorem}[proposition]{Theorem}
\newtheorem{corollary}[proposition]{Corollary}
\theoremstyle{definition}
\newtheorem{definition}[proposition]{Definition}
\begin{document}

\title{Brave new local moduli\\ for ordinary~K3 surfaces}

\author{Markus Szymik}

\date{August 2009}

\maketitle

\begin{abstract}
\noindent
It is shown that the K3 spectra which refine the local rings of the
moduli stack of ordinary~$p$-primitively polarized K3 surfaces in
characteristic~$p$ allow for an~$\Eoo$ structure which is unique up to
equivalence. This uses the~$\Eoo$ obstruction theory of Goerss and
Hopkins and the description of the deformation theory of such K3
surfaces in terms of their Hodge F-crystals due to Deligne and
Illusie.  Furthermore, all automorphism of such K3 surfaces can be
realized by~$\Eoo$ maps which are unique up to homotopy, and this can
by rigidified to an action if the automorphism group is tame.

\vspace{\baselineskip}	

\noindent
	MSC 2000:
	14F30, 
	14J28, 
	55N22, 
	55P43, 
	55S12, 
	55S25. 
\end{abstract}


\section*{Introduction}

In recent years, progress has been made to enrich some classical
moduli stacks of arithmetic origin to objects of stable homotopy
theory, most notably in the case of elliptic curves, see
\cite{hopkins:ICM1994}, \cite{hopkins:ICM2002},
\cite{hopkins+miller:elliptic}, \cite{lurie:elliptic}, but also for
abelian \hbox{varieties}, see \cite{behrens+lawson}, and the Lubin-Tate
moduli of formal groups, see~\cite{hopkins+miller:Lubin-Tate},
\cite{rezk:hopkins+miller}, and~\cite{goerss+hopkins:summary}. To get
an overview, there are the very useful surveys~\cite{goerss:Banff}
and~\cite{goerss:Bourbaki}. This paper pursues that program in the
context of K3 surfaces.

For each odd prime~$p$ there is a Deligne-Mumford moduli stack of
$p$-primitively polarized~K3 surfaces. See~\cite{rizov}, for
example. It is smooth of dimension~19. This stack will be formally
completed at~$p$, and the resulting~$p$-adic moduli stack
of~$p$-primitively polarized~K3 surfaces will be denoted by~$\M$ in
the following. Further decorations will become necessary in due
course. The generic part~$\Mord$ of the moduli stack~$\M$ consists of
the ordinary~K3 surfaces: those whose associated formal Brauer group,
see~\cite{artin+mazur}, is multiplicative. It is this part we will be
dealing with here.

A general idea behind the enrichment of moduli stacks to objects of
stable homotopy theory is to replace the structure sheaves of rings on
the moduli stacks by sheaves of ring spectra, objects which represent
multiplicative cohomology theories. Ring spectra nowadays come in two
kinds of precision: the older `up to homotopy' versions, and the more
recent `brave new rings' version. See~\cite{may+quinn+ray+tornehave}
for the classic text on the latter,
and~\cite{mandell+may+schwede+shipley} for a comparison of many of the
more recent models. A {\it K3 spectrum} is a triple~$(E,X,\iota)$,
where~$E$ is an even periodic ring spectrum `up to homotopy',~$X$ is a
K3 surface over~$\pi_0E$, and~$\iota$ is an isomorphism of the formal
Brauer group~$\Brhat_X$ of~$X$ with the formal group associated with
such an~$E$, see~\cite{szymik:K3spectra}, where it is also proven that
all local rings of~$\M$ at K3 surfaces~$X$ of finite height and their
formal completions can be realized as underlying rings~$\pi_0E$ for
suitable~K3 spectra~$(E,X,\iota)$.

The aim of this writing is to enhance the multiplications on these K3
spectra from good old `up to homotopy' to brave new `highly
structured' versions in the ordinary case. Although this is still less
than having a sheaf of~$\Eoo$ ring spectra on the ordinary locus, this
already tells us what the stalks will be.  I will return
to the construction of a sheaf of~$\Eoo$ ring spectra on the ordinary
locus (and beyond) somewhere else.

There is a good reason why the local question in the K3 case should be
thought of as the essential step: In contrast to elliptic curves and
abelian varieties, where the local deformation theory is reduced to
the deformation theory of the associated Barsotti-Tate groups by means
of the Serre-Tate theorem, see~\cite{lubin+serre+tate},
\cite{messing:serre-tate}, \cite{drinfeld}, \cite{katz:serre-tate},
and~\cite{illusie:barsotti-tate}, this does not hold for polarized K3
surfaces in general, and not obviously so in the ordinary case,
although~\cite{nygaard:Tate_for_ordinary} proves a result along these
lines for K3 surfaces without polarizations. Instead, it seems that K3
surfaces will have to be dealt with by means of their crystalline
invariants, and this is the optic in which they will be viewed
here. The formal Brauer group associated with a K3 surface will
sometimes be mentioned for the benefit of the traditionalists, but the
the mindset of algebraic topologists slowly seemed to change from
formal group laws over formal groups to Barsotti-Tate groups. The next
step towards crystals now seems inevitable, and this may well be
considered as the primary novelty introduced here.

Here is an outline of the following text: In
Sections~\ref{sec:ordinary} and~\ref{sec:trivialized}, we will discuss
some structure present on the local moduli spaces of ordinary and
trivialized~K3 surfaces with polarization, respectively. As it turns
out, this is exactly the structure observed on the~$p$-adic
$\K$-homology of~$\K(1)$-local~$\Eoo$ ring spectra. Thus, it may serve
as an input for the obstruction theory of Goerss and Hopkins, which is
reviewed in Section~\ref{sec:GoerssHopkins}. In
Section~\ref{sec:applications}, this will be applied to prove the
existence and uniqueness of an~$\Eoo$ structure on said ring
spectra. The final Section~\ref{sec:symmetries} exploits the
symmetries of ordinary K3 surfaces, in other words, the stackiness of
$\M$.

I would like to thank Mike Hopkins and Haynes Miller for their
generous hospitality in different stages of this project, Mark Behrens
and Jacob Lurie for explanations of various topics related to it, and
Thomas Zink for listening to and commenting on my crystalline
problems.


\section{Ordinary K3 surfaces}\label{sec:ordinary}

In this section, we will review the local deformation theory of
polarized K3 surfaces, especially of the ordinary ones, from the point
of view of their crystals. The main references are
\cite{ogus:supersingular}, \cite{deligne+illusie:relevements},
\cite{deligne+illusie:cristaux}, and \cite{katz:appendix}.


\subsection{K3 surfaces and their deformations}

Let~$k$ be an algebraically closed field. A {\it K3
  surface}~\hbox{$X$} over~$k$ is a smooth projective surface over~$k$
such that its canonical bundle~$\Omega^2_{X/k}$ is trivial and such
that~$X$ is not abelian. Examples are the {\it Fermat quartic} defined
by~\hbox{$T^4_1+T^4_2+T^4_3+T^4_4$} in~$\mathbb{P}^3_k$, more
generally any smooth hypersurface of degree~$4$, and the {\it Kummer
  surfaces}, which are obtained by extending the inversion on an
abelian surface over the blowup at the 16 fixed points and passing to
the quotient. In these examples and from now on it will be assumed
that~$k$ is of odd characteristic~$p$.

The Hodge diamond of a K3 surface~$X$, which symmetrically displays
its Hodge numbers~$\dim_k\H^j(X,\Omega^i_{X/k})$, looks as follows.
\begin{displaymath}
\begin{array}{ccccc}
&&1&&\\
&0&&0&\\
1&&20&&1\\
&0&&0&\\
&&1&&
\end{array}
\end{displaymath}
This implies that the Hodge-to-de Rham spectral sequence has to
degenerate at~$E_1$. In particular, there are no obstructions to
extending deformations, the tangent space to the deformation functor
has dimension~$20$, and there are no infinitesimal automorphisms. This
gives the following result, where~$W$ denotes the ring of~$p$-typical
Witt vectors of~$k$.

\begin{theorem}{\upshape(\cite{deligne+illusie:relevements}, 1.2)}
  The formal deformation space~$S$ of~$X$ over~$W$ is formally smooth
  of dimension~$20$, so that there is a non-canonical isomorphism
  \begin{displaymath}
    S\cong\AA^{\!20}_W\,,
  \end{displaymath}
  and there is a universal formal deformation~$\calX$ over~$S$.
\end{theorem}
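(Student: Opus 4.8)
The plan is to realize $S$ as the formal spectrum of the ring pro-representing the deformation functor $\mathrm{Def}_X$ on the category of Artinian local $W$-algebras with residue field~$k$, and to read off its structure from the coherent cohomology of the tangent sheaf $T_{X/k}=(\Omega^1_{X/k})^\vee$. Since $X$ is smooth over~$k$, its cotangent complex is $\Omega^1_{X/k}$ concentrated in degree zero, so that for a small extension $A'\twoheadrightarrow A$ in this category, with kernel~$I$ a finite-dimensional $k$-vector space, the obstruction to lifting a deformation over~$A$, the set of such liftings once one exists, and the group of automorphisms of a given lifting over~$A'$ restricting to the identity over~$A$ are controlled by $\H^2(X,T_{X/k})\otimes_k I$, $\H^1(X,T_{X/k})\otimes_k I$, and $\H^0(X,T_{X/k})\otimes_k I$, respectively.

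First I would compute these three spaces. Triviality of the canonical bundle $\Omega^2_{X/k}$, together with the fact that $X$ is a surface, gives a non-canonical isomorphism $T_{X/k}\cong\Omega^1_{X/k}$, hence $\H^i(X,T_{X/k})\cong\H^i(X,\Omega^1_{X/k})$ for all~$i$. By the Hodge diamond recalled above these $k$-vector spaces have dimensions $0$, $20$, and $0$ for $i=0,1,2$. Therefore deformations of~$X$ over~$W$ are unobstructed, the tangent space $\mathrm{Def}_X(k[\epsilon])$ is $20$-dimensional over~$k$, and $X$ has no nontrivial infinitesimal automorphisms.

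Next I would run the Schlessinger machine. The exactness conditions (H1) and (H2) for $\mathrm{Def}_X$ are standard, finiteness of the tangent space is (H3), and the vanishing $\H^0(X,T_{X/k})=0$, that is the triviality of infinitesimal automorphisms, yields the rigidity condition (H4). Hence $\mathrm{Def}_X$ is pro-representable by a complete local Noetherian $W$-algebra~$R$ with residue field~$k$; writing $R\cong W[[t_1,\dots,t_n]]/J$ with~$J$ contained in the square of the maximal ideal, the tangent space computation forces $n=20$. The vanishing of the obstruction space $\H^2(X,T_{X/k})=0$ makes the functor $\mathrm{Def}_X$ formally smooth over~$W$, so that~$R$ is a formally smooth complete local $W$-algebra with residue field~$k$ and one may take $J=0$, that is $R\cong W[[t_1,\dots,t_{20}]]$; this is the asserted non-canonical isomorphism $S=\Spf R\cong\AA^{\!20}_W$. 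The universal formal deformation $\calX$ over~$S$ is then the proper flat formal scheme $\varinjlim_n\calX_n$ assembled from the compatible system of deformations $\calX_n$ of~$X$ over $R/\mathfrak{m}_R^{n+1}$ classified by $\id_R$, and it is universal because $R$ pro-represents $\mathrm{Def}_X$.

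The genuine content is concentrated in the cohomological vanishing statements of the first step, which themselves rest on the degeneration of the Hodge-to-de Rham spectral sequence in characteristic~$p$, here encoded in the shape of the Hodge diamond; everything afterwards is the standard deformation-theoretic formalism. The only points demanding care are the mixed-characteristic bookkeeping — that one deforms over~$W$ and not over~$k$, so that the coefficient ring contributes nothing to the dimension count and~$S$ carries exactly the $20$ geometric parameters — and the observation that no polarization enters here, since the statement asks only for $\calX$ as a proper flat formal scheme over~$S$, not for its algebraization, which would require a relatively ample invertible sheaf whose existence is a priori obstructed by $\H^2(X,\O_X)\neq 0$.
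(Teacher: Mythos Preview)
Your proof is correct and follows essentially the same line as the paper, which does not give a self-contained proof but sketches the argument in the sentence immediately preceding the theorem: the Hodge diamond forces the dimensions of $\H^i(X,T_{X/k})\cong\H^i(X,\Omega^1_{X/k})$ to be $0$, $20$, $0$, whence there are no obstructions, a $20$-dimensional tangent space, and no infinitesimal automorphisms. You have simply made explicit the standard formalism (Schlessinger's criteria and the identification $T_{X/k}\cong\Omega^1_{X/k}$ via the trivial canonical bundle) that the paper leaves implicit in its citation of Deligne--Illusie.
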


Further down, see Theorem~\ref{thm:cc}, we will see that there is a
particularly useful set of coordinates for~$S$ in the ordinary case.

A {\it polarized K3 surface} is a pair~$(X,L)$, where~$X$ is a~K3
surface and~$L$ is an ample line bundle on~$X$. We shall always assume
that the polarization is~{\it~$p$-primitive} for the chosen prime~$p$
in the sense that~$L$ is not isomorphic to the~$p$-th power of another
line bundle. This implies that~$p$ does not divide the degree~$L^2$
of~$L$.

\parbox{\linewidth}{\begin{theorem}{\upshape(\cite{deligne+illusie:relevements}, 1.5 and 1.6)} Let~$L$ be a polarization on~$X$ as above. The formal
  deformation space of~$(X,L)$ is representable by a closed formal
  subscheme~$S_L\subset S$, defined by a single equation. It is flat
  over~$W$ of relative dimension~$19$.
\end{theorem}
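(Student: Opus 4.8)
The plan is to exhibit $S_L$ as the zero locus of the single obstruction to extending~$L$ along the universal deformation~$\calX$ of~$X$ over~$S$, and then to read off flatness and the dimension count from the fact that this obstruction is non-zero modulo~$p$. I expect the main obstacle to be the first point: identifying, for an arbitrary~$W$-deformation of~$X$ pulled back from~$S$, the obstruction to extending the line bundle in crystalline terms. Everything after that is comparatively formal, and it is there that the~$p$-primitivity of the polarization enters.

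First I would set up the crystalline obstruction. Let~$\pi\colon\calX\to S$ be the universal formal deformation. The crystalline first Chern class of~$L$, viewed in the de Rham realization, extends canonically to a~$\nablaGM$-horizontal global section~$\tilde c_1$ of~$\HdR^2(\calX/S)$; this is the crystalline incarnation of the fact that~$c_1$ of a line bundle propagates flatly along a family. For a local~$W$-scheme~$T$ with a map~$T\to S$, the pulled-back deformation~$\calX_T$ carries an extension of~$L$ if and only if~$\tilde c_1|_T$ lies in the first step~$F^1\HdR^2(\calX_T/T)$ of the Hodge filtration; this is the content of~\cite{deligne+illusie:relevements}, see also~\cite{katz:appendix}. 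Since the Hodge diamond of~$X$ forces~$\HdR^2(\calX/S)/F^1\cong\R^2\pi_*\O_\calX$ to be an invertible~$\O_S$-module, a choice of trivialization turns the image of~$\tilde c_1$ in this quotient into a single power series~$f\in\O_S\cong W[[t_1,\dots,t_{20}]]$, and~$S_L$ is the closed formal subscheme~$V(f)$. In particular the ideal of~$S_L$ is principal, i.e. defined by a single equation.

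It then remains to see that~$W[[t_1,\dots,t_{20}]]/(f)$ is flat over~$W$ with~$19$-dimensional special fibre, for which it suffices that the reduction~$\bar f$ of~$f$ modulo~$p$ is a non-zero non-unit. It is a non-unit because~$S_L$ contains the closed point of the special fibre (the trivial deformation obviously carries~$L$), so~$f(0)=0$. To see that~$\bar f\neq0$, one checks that the differential of~$f$ at that point is, after the identifications above, the Kodaira--Spencer map followed by cup product with the Chern class, namely the map
\begin{displaymath}
  \H^1(X,T_{X/k})\longrightarrow\H^2(X,\O_X),\qquad
  \theta\longmapsto\theta\smile c_1(L).
\end{displaymath}
Since the canonical bundle of a K3 surface is trivial, Serre duality identifies this with the transpose of multiplication by~$c_1(L)$ on~$\H^1(X,\Omega^1_{X/k})$, so it is surjective provided~$c_1(L)\neq0$ there. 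But the self-cup-product~$c_1(L)\smile c_1(L)$ is the self-intersection number~$L^2$ in~$\H^2(X,\Omega^2_{X/k})\cong k$, which is non-zero because~$L$ is ample, so~$L^2>0$, and the polarization is~$p$-primitive, so~$p\nmid L^2$. Hence~$c_1(L)\neq0$ and~$\bar f$ has a non-zero linear term; in particular~$f\notin pW[[t_1,\dots,t_{20}]]$, so~$p$ is a non-zero-divisor on~$W[[t_1,\dots,t_{20}]]/(f)$, which is therefore flat over~$W$, and its special fibre~$k[[t_1,\dots,t_{20}]]/(\bar f)$ has dimension~$19$. (In fact~$\bar f$ being part of a regular system of parameters even shows~$S_L$ is formally smooth over~$W$ of relative dimension~$19$, but only flatness will be needed.)

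A variant would bypass crystals and run the deformation-theoretic obstruction theory of the pair~$(X,L)$ directly over the Artinian thickenings of~$k$ and of~$W$ in turn: at each step the locus where~$L$ extends is cut out by the vanishing of an obstruction in~$\H^2(X,\O_X)\otimes I\cong I$, a single equation whose linear term is again cup product with~$c_1(L)$, and the same input~$p\nmid L^2$ makes each of these cuts transverse, including across~$p$; assembling them yields the same~$V(f)$ with the same flatness and dimension.
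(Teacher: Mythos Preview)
The paper does not supply its own proof of this theorem; it is imported wholesale from \cite{deligne+illusie:relevements}, 1.5 and 1.6, with only the subsequent remark that flatness forces~$p$ not to divide a defining equation, and a forward reference to Theorem~\ref{thm:c1} for an explicit equation in the ordinary case. So there is no in-paper argument to compare against.

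That said, your sketch is essentially the argument of the cited reference and is correct in outline: the obstruction to propagating~$L$ is the image of the horizontal extension of~$c_1(L)$ in~$\HdR^2(\calX/S)/F^1\cong\R^2\pi_*\O_\calX$, a line bundle by the Hodge numbers, hence one equation; the linear term of its reduction is cup product with~$c_1(L)$ via Kodaira--Spencer; and~$p\nmid L^2$ (from~$p$-primitivity) makes this nonzero. Your parenthetical about formal smoothness over~$W$ is also correct and is exactly what the paper attributes to \cite{ogus:supersingular}, 2.2, a few paragraphs later. One cosmetic point: the phrase ``$L$ is ample, so~$L^2>0$'' should be read as a statement about the integer~$L^2$, after which~$p\nmid L^2$ is what makes its image in~$k$ nonzero; you have this right, but the wording momentarily blurs the two.
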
}

Note that flatness implies that~$p$ does not divide an equation
defining~$S_L$. Further down, see Theorem~\ref{thm:c1}, a more precise
formula for such an equation will be given in the ordinary case, and
this will show that~$S_L$ is in fact formally smooth. 


\subsection{Crystals associated with K3 surfaces}\label{sec:K3crystals}

Here it will be explained, following~\cite{deligne+illusie:cristaux},
2.2, how to associate a Hodge F-crystal to a K3 surface, and what it
means for that crystal, and therefore by definition for the K3
surface, to be ordinary.

As before, let~$\calX$ be a universal formal deformation over~$S$ of a
K3 surface~$X$. Then the~$\O(S)$-module
\begin{displaymath}
	H=\HdR^2(\calX/S),
\end{displaymath}
together with the Gauss-Manin connection~$\nabla=\nablaGM$ is a
crystal.

If~$\upphi$ is a lift of Frobenius to~$S$ which is compatible with
the canonical lift of Frobenius to~$W$, there is an induced~$\upphi$-linear map
\begin{displaymath}
	F_\upphi\colon H\longrightarrow H.
\end{displaymath}
This would follow immediately from the existence of
an~$S$-morphism~$\calX\rightarrow\upphi^*\calX$ which lifts the
relative Frobenius of~$X$, as~$F_\upphi$ could be defined as the
composition
\begin{displaymath}
  \upphi^*\HdR^2(\calX/S)\cong\HdR^2(\upphi^*\calX/S)\longrightarrow\HdR^2(\calX/S).
\end{displaymath}
However, such an arrow need not exist. But its mod~$p$ reduction, the
relative Frobenius, always exists. Thus, one may use (a) the canonical
isomorphism between the de Rham cohomology of~$\calX$ and the
crystalline cohomology of its reduction, and (b) the functoriality of
crystalline cohomology to obtain the desired maps. Summing up, this
means that~$(H,\nabla,F_\bullet)$ is an F-crystal. Note that some such
lift~$\upphi$ of Frobenius always exists by the formal smoothness
of~$S$. Later on, see Section~\ref{sec:Katz_lift}, a particular lift
will be distinguished in the ordinary case.

The Hodge filtration
\begin{displaymath}
	H=F^0\supset F^1\supset F^2\supset F^3=0
\end{displaymath}
lifts the Hodge filtration on the reduction and satisfies the
so-called Griffiths transversality condition. In other words,
$(H,\nabla,F_\bullet,F^\bullet)$ is a Hodge F-crystal.

A K3 surface is {\it ordinary} if the Hodge and Newton polygons of its
associated Hodge F-crystal agree, see~\cite{mazur:bulletin},
Section~3. The Hodge polygon codifies the Hodge numbers, as is
illustrated in the following figure.

\vbox{\small
  \begin{center} 
    \unitlength0.5cm
    \begin{picture}(6,6)(0,0)
    \thicklines

      \multiput(0,0)(.15,.15){40}{\makebox(0,0){$\cdot$}}
      \multiput(0,0)(0,.2){30}{\makebox(0,0){$\cdot$}}
      \multiput(1,0)(.2,0){25}{\makebox(0,0){$\cdot$}}
      \multiput(6,0)(0,.2){30}{\makebox(0,0){$\cdot$}}
      \multiput(0,6)(.2,0){30}{\makebox(0,0){$\cdot$}}

      \put(-1,0){\makebox(0,0){$0$}}
      \put(-1,4){\makebox(0,0){$20$}}
      \put(-1,6){\makebox(0,0){$22$}}
    
      \put(0,-1){\makebox(0,0){$0$}}    
      \put(0,0){\makebox(0,0){$\bullet$}}
      
      \put(0,0){\line(1,0){1}}
      
      \put(1,-1){\makebox(0,0){$1$}}
      \put(1,0){\makebox(0,0){$\bullet$}}
      
      \put(1,0){\line(1,1){4}}

      \put(5,-1){\makebox(0,0){$21$}}
      \put(5,4){\makebox(0,0){$\bullet$}}
      
      \put(5,4){\line(1,2){1}}

      \put(6,-1){\makebox(0,0){$22$}}
      \put(6,6){\makebox(0,0){$\bullet$}}
    \end{picture}

	\vspace{\baselineskip}

	Figure: The Newton/Hodge polygon of an ordinary K3 surface
        (not drawn to scale)
\end{center}}

The Newton polygon codifies the multiplicities and~$p$-adic valuations
of the eigenvalues of Frobenius. In the case of a K3 surface, this
suggests correctly that the first slope of the Newton polygon is zero
if and only if Frobenius acts on~$\H^2(X,\O_X)$ bijectively. It is
also known that the first slope is~$1-(1/h)$, where~$h$ is the height
of the formal Brauer group. See~\cite{illusie:deRham-Witt}, 7.2, for
example. Thus, a K3 surface is ordinary if and only if its formal
Brauer group is multiplicative.

An ordinary Hodge F-crystal~$(H,\nabla,F_\bullet,F^\bullet)$ of level
$n$, where~$n=2$ for K3 surfaces, has a filtration
\begin{displaymath}
	0\subset U_0\subset U_1\subset\cdots\subset U_n=H
\end{displaymath}
such that Frobenius acts on~$U_j/U_{j-1}$ as the~$p^j$-th multiple
of a bijection, and this filtration is opposite to the Hodge
filtration in the sense that
\begin{displaymath}
	H=\bigoplus_j\,(U_j\cap F^j).
\end{displaymath}
This again characterizes ordinary Hodge F-crystals,
see~\cite{deligne+illusie:cristaux}, 1.3.2.


\subsection{Canonical coordinates and the Katz lift}\label{sec:Katz_lift}

The associated Hodge F-crystal of an ordinary K3 surface, as described
in Section~\ref{sec:K3crystals}, has a particularly nice structure,
and this can be used to find particularly nice coordinates on the base
$S$ of its universal formal deformation.

\begin{theorem}
	{\upshape (\cite{deligne+illusie:cristaux}, 2.1.7)}\label{thm:cc}
	\label{thm:DI:2.1.7}
	Let~$X$ be an ordinary K3 surface with universal
        formal deformation~$\calX$ over~$S$. Then there is a
        basis~$(a,b_1,\dots,b_{20},c)$ for the associated crystal as
        well as coordinates~$t_1,\dots,t_{20}$ on~$S$ such that the
        following properties {\upshape (\ref{thm:DI:2.1.7}.1) --
          (\ref{thm:DI:2.1.7}.4)} hold.
\end{theorem}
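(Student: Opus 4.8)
The plan is to follow the proof of Deligne–Illusie's construction in \cite{deligne+illusie:cristaux}, 2.1.7, since the statement is essentially a citation. The key structural input is the decomposition of the ordinary Hodge F-crystal recorded at the end of Section~\ref{sec:K3crystals}: the Newton=Hodge condition yields a filtration $0\subset U_0\subset U_1\subset U_2=H$ on which Frobenius acts on $U_j/U_{j-1}$ as $p^j$ times a bijection, and this filtration is opposite to the Hodge filtration, giving $H=\bigoplus_j(U_j\cap F^j)$. Because the Hodge numbers are $1,20,1$, the pieces $U_0$, $U_1\cap F^1$, and $U_2/U_1$ are free of ranks $1$, $20$, $1$. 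I would choose $a$ to be a generator of $U_0$ (the unit-root subcrystal, where Frobenius acts invertibly), $c$ to project to a generator of $U_2/U_1$, and $b_1,\dots,b_{20}$ to be a basis of $U_1\cap F^1$; after rescaling one can normalize the Frobenius eigenvalue-type on $a$ so that $F_\bullet(a)=a$ (using that $U_0$ is a unit-root F-crystal over a $p$-adically complete base, hence étale-locally trivial, and here $S$ is a formal disc so it is actually trivial). This produces the basis $(a,b_1,\dots,b_{20},c)$.

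Next I would produce the coordinates $t_1,\dots,t_{20}$ on $S$. The idea is that the Gauss–Manin connection, written in the chosen basis, encodes the deformation: Griffiths transversality forces $\nabla a\in F^1\otimes\Omega^1_S$, so $\nabla a=\sum_i b_i\otimes\omega_i$ for certain $1$-forms $\omega_i$ on $S$, and the formal smoothness of $S$ of dimension $20$ together with the fact that this ``period map'' is an isomorphism on tangent spaces (the Kodaira–Spencer map is an isomorphism for K3 surfaces, which is exactly the unobstructedness statement in the theorems of \cite{deligne+illusie:relevements} quoted above) shows that the $\omega_i$ form a basis of $\Omega^1_{S/W}$. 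One then integrates: on the formal disc $S$, after possibly translating the origin to the point $X$, there are coordinates $t_i$ with $\omega_i=\d t_i/(1+t_i)$, or equivalently $\omega_i=\d\log(1+t_i)$; this ``logarithmic'' normalization is what makes the coordinates \emph{canonical} and is parallel to the Serre–Tate canonical coordinates on the ordinary locus of modular or Siegel moduli. The relation $\nabla b_i\equiv c\otimes\omega_i$ modulo $F^1$ and the compatibility of $\nabla$ with the symmetric-pairing structure on $H$ pin down the remaining entries of the connection matrix and confirm the coordinates are consistent. The four unnumbered conclusions (\ref{thm:DI:2.1.7}.1)--(\ref{thm:DI:2.1.7}.4), which are not yet visible in the excerpt, will presumably record: the shape of $\nabla$ in these coordinates, the shape of the Frobenius matrix $F_\upphi$ for the distinguished (Katz) lift $\upphi$ that sends $t_i\mapsto(1+t_i)^p-1$, the fact that $(a,b_i,c)$ is adapted to both the Hodge and the opposite filtration, and that the $t_i$ vanish at the canonical lift of $X$.

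The main obstacle, in a genuine proof, is showing that the period coordinates genuinely integrate to \emph{canonical} coordinates with the multiplicative (logarithmic) normalization, rather than merely to some étale coordinates: this requires exploiting the full F-crystal structure, not just the connection. Concretely, one uses that the Katz lift $\upphi$ of Frobenius is characterized by compatibility with the $U_\bullet$ filtration (it preserves the unit-root part and acts compatibly with the canonical subgroup/Barsotti–Tate structure), and that $\nabla$ and $F_\upphi$ together satisfy Katz's ``$\nabla\circ F_\upphi = p\cdot F_\upphi\circ\nabla$''-type compatibility; feeding the normalized $a$ with $F_\bullet(a)=a$ through this relation forces the $1$-forms $\omega_i$ to be $\upphi$-eigen-ish, which is exactly the condition $\upphi^*\omega_i=p\,\omega_i$ that the logarithmic forms $\d\log(1+t_i)$ satisfy and that characterizes them up to the action of $\ZZ_p^\times$ on each coordinate. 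So the real content is the interplay of the connection with the chosen Frobenius lift, and everything else (choosing the basis, counting ranks, identifying $\Omega^1_S$) is bookkeeping licensed by the Hodge-theoretic input already assembled in Sections~\ref{sec:ordinary} above. Since the statement is quoted verbatim from \cite{deligne+illusie:cristaux}, in the paper itself I would simply cite \emph{loc.\ cit.}\ and record the four properties, deferring the construction to that reference.
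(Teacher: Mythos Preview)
The paper does not prove this theorem at all: it is stated with the citation to \cite{deligne+illusie:cristaux}, 2.1.7, the four properties are simply listed, and the exposition moves on. You recognize this correctly in your final paragraph, and on that point your proposal matches the paper exactly.

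Your sketch of the Deligne--Illusie argument, however, has the Gauss--Manin connection running the wrong way. You write that Griffiths transversality forces $\nabla a\in F^1\otimes\Omega^1_S$ and hence $\nabla a=\sum_i b_i\otimes\omega_i$; but $a$ generates $U_0$, which is \emph{opposite} to $F^1$, so $a\notin F^1$, and Griffiths transversality says nothing about $\nabla a$ beyond $\nabla a\in H\otimes\Omega^1_S$. What actually pins down $\nabla a$ is that $U_0$ is a horizontal sub-F-crystal (the unit-root piece of the slope filtration), and with $a$ chosen as a horizontal generator one gets $\nabla a=0$, exactly as property (\ref{thm:DI:2.1.7}.3) records. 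The canonical coordinates arise instead from $\nabla b_j=\omega_j\otimes a$ (the $b_j$ lie in $U_1\cap F^1$, with $U_1$ horizontal and Griffiths sending $F^1$ into $F^0\otimes\Omega$), and the Kodaira--Spencer isomorphism is read off from $\nabla c=-\sum_j\omega_j\otimes b_j^\vee$ with $c\in F^2$. Your companion claim ``$\nabla b_i\equiv c\otimes\omega_i$ modulo $F^1$'' is reversed for the same reason. The broad strategy you outline---exploit the horizontal slope filtration, use Kodaira--Spencer to produce a coframe, and use the Frobenius compatibility $\uppsican^*\omega_j=p\,\omega_j$ to force the logarithmic normalization---is the right one; only the bookkeeping of which basis vector sits in which filtration step needs to be corrected.
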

	
{\upshape (\ref{thm:DI:2.1.7}.1)} The basis is adapted to the
decomposition
\begin{displaymath}
	H=U_0\oplus(U_1\cap F^1)\oplus F^2
\end{displaymath}
and satisfies~$\langle a,b_j\rangle=0$,~$\langle
b_j,c\rangle=0$,~$\langle a,a\rangle=0$,~$\langle c,c\rangle=0$,
and~$\langle a,c\rangle=1$, where~$\langle\cdot,\cdot\rangle$ denotes
the cup-product pairing on middle cohomology.
	
{\upshape (\ref{thm:DI:2.1.7}.2)} If use multiplicative
notation~$q_j=t_j+1$ and~$\omega_j=\d\!\log(q_j)$, then~$(\omega_j)$
is a~$W$-basis of~$\Omega_{S/W}$.
	
{\upshape (\ref{thm:DI:2.1.7}.3)} The Gauss-Manin connection acts via
\begin{displaymath}
	\nablaGM(a)=0\,,\quad
	\nablaGM(b_j)=\omega_j\otimes a\,,\quad
	\nablaGM(c)=-\sum_j\omega_j\otimes b_j^\vee\,,
\end{displaymath}
where~$(b_j^\vee)$ is the cup-dual basis to~$(b_j)$.

{\upshape (\ref{thm:DI:2.1.7}.3)} If~$\uppsican$ is the lift of
Frobenius given by~$\uppsican(q_j)=q_j^p$, then the
induced~$\uppsican$-linear map~$F_\uppsican$ on~$H=\HdR^2(\calX/S)$ is
given by
\begin{displaymath}
		F_\uppsican(a)=a\,,\quad
	F_\uppsican(b_j)=pb_i\,,\quad
	F_\uppsican(c)=p^2c\,.
\end{displaymath}

A system~$(a,b,c,t)$ as in the preceding theorem is called a {\it
  system of canonical coordinates} on~$S$, and~$\uppsican$ will be
referred to as the {\it canonical lift} or {\it Katz
  lift}~(\hbox{after}~\cite{katz:appendix}) of Frobenius. The term
{\it Deligne-Tate mapping} is also in use.
  
While a system of canonical coordinates is not unique, there is only a
rather restricted choice involved: If~\hbox{$(a',b',c',t')$} is
another system, there are~$\alpha\in\ZZ_p^\times$
and~\hbox{$\beta=(\beta_{ij})\in\operatorname{GL}_{20}(\ZZ_p)$} such
that
\begin{displaymath}
	a'=\alpha a,\quad
	b'_i=\sum_j\beta_{ji}b_j,\quad
	c'=c/\alpha,
\end{displaymath}
and
\begin{displaymath}
	q'_i=\prod_jq_j^{\beta_{ji}/\alpha}.
\end{displaymath}
See~\cite{deligne+illusie:cristaux}, 2.1.13. In particular, this shows
that the Katz lift does not depend on the canonical coordinates. It is
intrinsic to the situation. As the notation~\hbox{$q_j=t_j+1$}
and~$\omega_j=\d\!\log(q_j)$ already indicates, these coordinates can
be used to identify~$S$ with a formal torus
\begin{displaymath}
	S\cong\GGmhat^{20},
\end{displaymath}
as in~\cite{deligne+illusie:cristaux}. (A description of this group
structure on~$S$ without the use of the canonical coordinates has been
given in~\cite{nygaard:Tate_for_ordinary}.) The Katz lift is a formal
group homomorphism and the unique lift of Frobenius whose
associated~$F$ preserves the Hodge filtration,
see~\cite{katz:appendix}, A4.1.

If~$X$ is ordinary, and~$L$ is~$p$-primitive, the base~$S_L$ of the
universal formal deformation of~$(X,L)$ is not only flat but even
formally smooth, see~\cite{ogus:supersingular}, 2.2. More can be said
using a system~$(a,b,c,t)$ of canonical coordinates as above,
see~\cite{deligne+illusie:cristaux}, 2.2. Let
\begin{displaymath}
	e\colon\O(S)\longrightarrow W
\end{displaymath}
be the co-unit given by~$q_j\mapsto1$. Then the first crystalline
Chern class of~$L$ can be written
\begin{displaymath}
	\sum_j\lambda_je^*b_j
\end{displaymath}
with~$p$-adic integers~$\lambda_j$. As the first crystalline Chern
class of a~$p$-primitive line bundle is not divisible by~$p$, some
$\lambda_j$ will in fact be a~$p$-adic unit.

\begin{theorem}{\upshape(\cite{deligne+illusie:cristaux}, 2.2.2)}\label{thm:c1}
  In the notation as before,
\begin{displaymath}
		\prod_{j=1}^{20}q_j^{\lambda_j}=1
\end{displaymath}
	is an equation for~$S_L$ in~$S$.
\end{theorem}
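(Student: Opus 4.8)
The plan is to identify the closed formal subscheme $S_L \subset S$ with the kernel of the map on formal tori induced by the character $\prod_j q_j^{\lambda_j}$, by combining three inputs: the universal property of $S_L$ as the locus where $L$ deforms, the explicit description of the crystal $(H,\nabla,F_\bullet,F^\bullet)$ in canonical coordinates from Theorem~\ref{thm:cc}, and the crystalline interpretation of the first Chern class. First I would recall that a line bundle $L$ on $X$ deforms to the deformation $\calX_T$ over a base $T \hookrightarrow S$ precisely when its first crystalline Chern class, viewed as a horizontal (i.e.\ $\nabla$-flat) section of the restricted crystal, lies in the first step $F^1$ of the Hodge filtration — this is the usual Chern-class criterion for lifting line bundles, and it is exactly the point of view of \cite{deligne+illusie:cristaux}, 2.2. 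So the task is to compute, over a general test ring, when the flat extension of the class $\sum_j \lambda_j\, e^*b_j$ stays in $F^1$.

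Next I would carry out that computation. Using (\ref{thm:DI:2.1.7}.3), the Gauss--Manin connection is completely explicit: $\nablaGM(a)=0$, $\nablaGM(b_j)=\omega_j\otimes a$, and $\nablaGM(c)=-\sum_j\omega_j\otimes b_j^\vee$. Starting from the value $\sum_j\lambda_j b_j$ at the origin $e$ and solving $\nabla s = 0$ produces the horizontal section whose component along $a$ picks up a correction term involving $\log q_j = \log(1+t_j)$ (formally, integrating $\omega_j = \d\!\log q_j$). The section lies in $F^1 = U_1\cap F^1 \oplus F^2$ — equivalently, its $U_0$-component, i.e.\ the coefficient of $a$, vanishes — exactly when that accumulated logarithmic term is zero, which (exponentiating) is the condition $\prod_j q_j^{\lambda_j} = 1$. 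This shows $S_L$ is contained in the vanishing locus of $\prod_j q_j^{\lambda_j} - 1$; since some $\lambda_j$ is a $p$-adic unit this locus is a formally smooth divisor, flat over $W$ of relative dimension $19$, matching the dimension count already known for $S_L$, so the inclusion is an equality. (Alternatively one cites $S_L \subset S$ being cut out by a single equation, as in the second displayed theorem of the excerpt, and identifies that equation with ours up to a unit.)

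The main obstacle is making the flat-extension / horizontal-section computation rigorous over an arbitrary Artinian (or formal) test scheme rather than just formally: one must track the crystal structure under base change, be careful that $\log(1+t_j)$ is only a formal power series and that the "section lies in $F^1$" condition is the correct deformation-theoretic translation of "$L$ extends", and handle the passage from the level of tangent/obstruction spaces to an actual equation. All of this is carried out in \cite{deligne+illusie:cristaux}, 2.2, so in practice the proof is a matter of assembling their crystalline deformation machinery with the concrete formulas of Theorem~\ref{thm:cc}; the only genuinely new bookkeeping is matching the sign and the exact exponents $\lambda_j$ coming from the expansion $e^*b_j$ of the crystalline $c_1(L)$.
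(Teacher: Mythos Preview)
The paper does not give its own proof of this theorem: it is quoted directly from \cite{deligne+illusie:cristaux}, 2.2.2, and only the surrounding discussion (expressing $c_1(L)$ as $\sum_j\lambda_j e^*b_j$ and noting that some $\lambda_j$ is a unit) appears in the text. So there is nothing in the paper to compare against beyond the citation.

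That said, your sketch is correct and is essentially the argument of Deligne--Illusie. The key computation goes through cleanly: from $\nablaGM(b_j)=\omega_j\otimes a$ one checks that $b_j-(\log q_j)\,a$ is horizontal, so the horizontal extension of $\sum_j\lambda_j\,e^*b_j$ is $\sum_j\lambda_j b_j-\big(\sum_j\lambda_j\log q_j\big)a$; this lies in $F^1$ (equivalently, has vanishing $U_0$-component) if and only if $\sum_j\lambda_j\log q_j=0$, i.e.\ $\prod_j q_j^{\lambda_j}=1$. Your identification of the obstacle---justifying that ``horizontal section stays in $F^1$'' is exactly the deformation condition for $L$, over an arbitrary Artinian test object---is also accurate, and is precisely what the crystalline machinery in \cite{deligne+illusie:cristaux} supplies. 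The dimension/flatness argument at the end is a reasonable way to close, though in the original it comes out directly from the equation.
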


In other words, we can interpret the first crystalline Chern class as
a character of the formal torus~$S$, and~$S_L$ is its kernel.

\begin{proposition}
	The Katz lift~$\uppsican$ on~$S$ maps~$S_L$ into itself.
\end{proposition}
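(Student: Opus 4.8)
The plan is to use the two facts recorded above. First, in a system of canonical coordinates the Katz lift is the $p$-power map $q_j\mapsto q_j^p$, which is precisely multiplication by~$p$ on the formal torus~$S\cong\GGmhat^{20}$; this description is visibly independent of the choice of canonical coordinates, since under the transition formulae~$q_i'=\prod_jq_j^{\beta_{ji}/\alpha}$ one computes $\uppsican(q_i')=(q_i')^p$ again. Second, by Theorem~\ref{thm:c1}, the closed formal subscheme~$S_L$ is the kernel of the character
\begin{displaymath}
	\chi\colon S\cong\GGmhat^{20}\longrightarrow\GGmhat,\qquad
	(q_1,\dots,q_{20})\longmapsto\prod_{j=1}^{20}q_j^{\lambda_j},
\end{displaymath}
where~$q_j^{\lambda_j}=(1+t_j)^{\lambda_j}$ is the binomial power series with coefficients in~$\ZZ_p\subseteq W$. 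So~$S_L=\ker\chi$ is a formal subgroup of~$S$, and~$\uppsican=[p]_S$.

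Granting this, the claim reduces to the formal observation that multiplication by~$p$ preserves kernels of homomorphisms. Concretely, since~$\chi$ is a homomorphism of commutative formal groups it commutes with multiplication by~$p$, so for every point~$x$ of~$S$ one has~$\chi(\uppsican(x))=\chi([p]_S\,x)=[p]_{\GGmhat}(\chi(x))=\chi(x)^p$. If~$x$ lies on~$S_L=\ker\chi$, then~$\chi(x)=1$ and hence~$\chi(\uppsican(x))=1$, i.e.~$\uppsican(x)\in S_L$. The dual, ideal-theoretic version — which is what one actually writes down — is just as short: if~$g=\prod_jq_j^{\lambda_j}-1\in\O(S)$ denotes the equation cutting out~$S_L$, then
\begin{displaymath}
	\uppsican(g)=\prod_jq_j^{p\lambda_j}-1=(g+1)^p-1=\sum_{k=1}^{p}\binom{p}{k}g^k\in(g),
\end{displaymath}
so~$\uppsican$ carries the ideal of~$S_L$ into itself and therefore restricts to an endomorphism of~$S_L$.

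The only point that is not completely formal is the input to the first paragraph: one needs that raising to a~$p$-adic integer power genuinely defines a homomorphism~$S\to\GGmhat$ of formal groups — so that the first crystalline Chern class really is a character and~$S_L$ really is its kernel — and that the identification~$S\cong\GGmhat^{20}$ is compatible with the group law in the way that makes~$\uppsican$ equal to~$[p]_S$ rather than some other lift of Frobenius. Both are contained in \cite{deligne+illusie:cristaux}, 2.1--2.2, together with the characterisation of the Katz lift in \cite{katz:appendix}, A4.1, already quoted above. Once these are granted there is nothing left to do; in particular there is no issue with the~$W$-structures, as~$\uppsican$ is by construction compatible with the canonical Frobenius on~$W$ and~$S_L$ is defined over~$W$.
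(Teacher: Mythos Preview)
Your proof is correct and, in its ``ideal-theoretic version,'' is essentially identical to the paper's: both compute $\uppsican\big(\prod_j q_j^{\lambda_j}-1\big)=\big(\prod_j q_j^{\lambda_j}\big)^p-1$, which vanishes on~$S_L$. You additionally spell out the group-theoretic interpretation ($\uppsican=[p]_S$ preserves the kernel of a character), which the paper mentions just before the proposition but does not invoke in the proof itself.
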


\begin{proof}
  As~$S_L$ is defined in~$S$ by~$\prod_jq_j^{\lambda_j}=1$, the
  computation
	\begin{displaymath}
          \uppsican\big(\prod_jq_j^{\lambda_j}-1\big)
          =\prod_j\uppsican(q_j)^{\lambda_j}-1
          =\prod_j(q_j^p)^{\lambda_j}-1=\big(\prod_jq_j^{\lambda_j}\big)^p-1=0
	\end{displaymath}	
	shows that~$\uppsican$ preserves the equation.
\end{proof}


\section{Trivialized K3 surfaces}\label{sec:trivialized}

In the section, the analogue for K3 surfaces of Katz' notion of
trivialized elliptic curves will be discussed. See~\cite{katz:arcata},
\cite{katz:higher}, \cite{katz:eisenstein} for the latter.

\subsection{The rigidified moduli stack~$\Mtriv$}\label{sec:Mtriv}

Recall that~$\M$ is the~$p$-adic moduli stack of~$p$-primitively
polarized~K3 surfaces. Now let~$\Mord$ denote the open substack of
$\M$ consisting of the ordinary surfaces. 

\begin{definition}
	The rigidified moduli stack
	\begin{displaymath}
		\Mtriv
	\end{displaymath}
	classifies {\it trivialized K3 surfaces}: triples~$(X,L,a)$ of
        ordinary K3 surfaces~$X$ together with a~$p$-primitive
        polarization~$L$, and an element~$a$ of~$H$ which is
        the~$a$-part of a system of canonical coordinates: it is
        annihilated by the Gauss-Manin connection and left invariant
        by the Katz lift of Frobenius.
\end{definition}

The choice of~$a$ corresponds to the choice of an
isomorphism~$\Brhat_{X}\cong\GGmhat$, and it is in this way how Katz
introduced trivializations. However, for our purposes, the definition
given above corresponds to the crystalline mindset taken here, and it
turns out to be easier to work with, too.

It follows from the discussion in the previous
Section~\ref{sec:Katz_lift} that there is a free and transitive action
of the group~$\ZZ_p^\times$ of the~$p$-adic units on the fibers of the
forgetful morphism
\begin{displaymath}
  \Mtriv\longrightarrow\Mord.
\end{displaymath}
This yields a Galois covering with group~$\ZZ_p^\times$.

Let us now fix a~$p$-primitively polarized K3 surface~$(X,L)$, and
let~$S_L$ be the base of a universal formal deformation of
it. As~$\Mtriv$ is Galois over~$\Mord$, so is the pullback~$T_L$ along
the classifying morphism for the universal family.
\begin{center}
  \mbox{ \xymatrix{
      T_L\ar[d]\ar[r]& \Mtriv\ar[d] \\
      S_L\ar[r] & \Mord } }
\end{center}
As~$S_L$ is affine, so is~$T_L$. We are now going to see some
structure on its ring~$\O(T_L)$ of formal functions.


\subsection{The Adams operations}\label{sec:Adams}

Let~$T_L$ be as in the end of the previous
Section~\ref{sec:Mtriv}. The Galois action
of~$\Aut(\GGmhat)\cong\ZZ_p^\times$ on~$\Mtriv$ restricts to~$T_L$,
and the corresponding operations on the ring~$\O(T_L)$ of formal
functions will be denoted by~$\uppsi^k$~for~$p$-adic units~$k$. These
will be referred to as the {\it Adams operations}, a terminology which
will be justified in the following Section~\ref{sec:GoerssHopkins}.

Let us denote by~$\omega$ the Hodge line bundle over~$\M$ whose fiber
over~$(X,L)$ is the line~$\H^0(X,\Omega^2_{X})$ of regular 2-forms on
$X$. The same notation will be used for its restriction to~$\Mord$,
$\Mtriv$,~$S_L$, and~$T_L$ as needed. As the operators~$\uppsi^k$
on~$\O(T_L)$ are induced by an action on~$T_L$, it is clear, that we
have a similar action on~\hbox{$\H^0(T_L,\omega^{\otimes n})$} for
each integer~$n$.


\subsection{The operator~$\uptheta$}\label{sec:theta}

In addition to the lift of the action of the~$p$-adic units to~$T_L$,
we will now explain how the Katz lift~$\uppsican$ on~$\O(S_L)$ can be
extended to~$\O(T_L)$ as well. To do so, we need to produce, from the
given (universal)~$a$ over~$T_L$, another such element
for~$\uppsi_\mathrm{can}^*\calX$ in place of~$\calX$. This is easy to
do from the crystalline point of view on trivializations: The
morphism~$\uppsi_\mathrm{can}^*\calX\rightarrow\calX$ induces a
morphism
\begin{displaymath}
	\uppsi_\mathrm{can}^*\colon
	\HdR^2(\calX/S_L)
	\longrightarrow
	\HdR^2(\uppsi_\mathrm{can}^*\calX/S_L)
\end{displaymath}
which sends~$a$ to some element~$\uppsi_\mathrm{can}^*a$. The
resulting selfmap of~$T_L$ will be denoted by~$\uppsican$ as well.

\begin{proposition}
	The Katz lift~$\uppsican$ of Frobenius
	determines a unique operation~$\uptheta$ on~$\O(T_L)$ such that
	\begin{displaymath}
		\uppsican(f)=f^p+p\uptheta(f)
	\end{displaymath} 
	holds for each~$f$ in~$\O(T_L)$.
\end{proposition}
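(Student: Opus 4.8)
This is the familiar statement that a lift of Frobenius on a $p$-torsion-free ring induces a $p$-derivation, so the plan is to reduce it to two facts: that $\O(T_L)$ has no $p$-torsion, and that $\uppsican$ reduces modulo $p$ to the $p$-th power map. Granting these, multiplication by $p$ is injective on $\O(T_L)$, which immediately gives \emph{uniqueness} of $\uptheta$: any two candidates would satisfy $p\,\uptheta_1(f)=p\,\uptheta_2(f)$ for every $f$, hence agree. For \emph{existence}, the Frobenius-lift property says $\uppsican(f)-f^p\in p\,\O(T_L)$ for every $f$, so one may set $\uptheta(f)$ to be the unique element with $p\,\uptheta(f)=\uppsican(f)-f^p$. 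Thus the content lies entirely in the two facts.

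For $p$-torsion-freeness: by \cite{ogus:supersingular}, 2.2, the base $S_L$ is formally smooth over~$W$, so $\O(S_L)$ is $W$-flat, in particular $p$-torsion-free; and since $T_L\to S_L$ is a $\ZZ_p^\times$-Galois covering (end of Section~\ref{sec:Mtriv}), $\O(T_L)$ is flat over $\O(S_L)$, hence $p$-torsion-free as well.

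For the Frobenius-lift property, I would establish it first on $\O(S_L)$ and then propagate it to $\O(T_L)$. Over $S_L$ the map $\uppsican$ is the endomorphism with $\uppsican(q_j)=q_j^p$ which is semilinear over the Witt-vector Frobenius of $W$ (Theorem~\ref{thm:DI:2.1.7} and \cite{katz:appendix}, A4.1); expanding an arbitrary $f$ as a power series in the $q_j-1$ with coefficients in $W$, and using that the Witt Frobenius reduces to the $p$-power map on $k$ while $q_j^p-1\equiv(q_j-1)^p\ (\mathrm{mod}\ p)$, one reads off $\uppsican(f)\equiv f^p\ (\mathrm{mod}\ p)$. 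Over $T_L$ the self-map $\uppsican$ of Section~\ref{sec:theta} lies over $\uppsican$ on $S_L$, and because $T_L\to S_L$ is a Galois covering, hence formally étale, a lift of Frobenius on $\O(S_L)$ admits a \emph{unique} extension to a lift of Frobenius on $\O(T_L)$; I would identify the Section~\ref{sec:theta} self-map with this unique extension, so that again $\uppsican(f)\equiv f^p\ (\mathrm{mod}\ p)$ on all of $\O(T_L)$, and then define $\uptheta$ as above.

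The step I expect to cost the most effort is precisely this last identification: checking that the crystalline recipe for $\uppsican$ over $T_L$ — transporting the universal $a$ along $\uppsican^{*}\calX\to\calX$ — really does reduce modulo $p$ to the $p$-th power Frobenius. This comes down to unwinding the functoriality of de Rham and crystalline cohomology used in the construction, together with the fact that the mod-$p$ datum underlying $\uppsican$ is built from the relative Frobenius of $\calX$; once that is in place, everything else is formal.
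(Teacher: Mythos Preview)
Your proposal is correct and follows essentially the same approach as the paper: existence from $\uppsican$ being a lift of Frobenius, uniqueness from $p$-torsion-freeness (flatness over $W$) of $\O(T_L)$. The paper's proof is simply terser, asserting both facts in one line each without the additional justification you supply; in particular, it takes the Frobenius-lift property of $\uppsican$ on $T_L$ as given from the construction in Section~\ref{sec:theta}, rather than working through the \'etale-extension identification you flag as the main effort.
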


\begin{proof}
  As~$\uppsican$ is a lift of Frobenius, there is always one
  such~$\uptheta(f)$ which satisfies the equation. This shows
  that~$\uptheta$ exists.

  As~$T_L$ is flat (even formally smooth) over~$W$, multiplication
  by~$p$ is injective on~$\O(T_L)$. This shows that~$\uptheta$ is
  unique.
\end{proof}

Using terminology explained in the following
Section~\ref{sec:theta_algebras}, the same argument provides for the
structure of a graded~$\uptheta$-algebra with Adams operations on
\begin{displaymath}
  \left(\,\H^0(T_L,\omega^{\otimes n})\,|\,n\in\ZZ\,\right).
\end{displaymath}
This object can and will serve as a blueprint from which the~$\Eoo$
structures on the local K3 spectra mentioned in the introduction can
be (re)constructed, using the obstruction theory described in the
following section.


\section{Goerss-Hopkins obstruction theory}\label{sec:GoerssHopkins}

In this section, we review the work of Goerss and Hopkins
on~$\K(1)$-local~$\Eoo$ ring spectra and spaces of~$\Eoo$ maps between
them. An odd prime~$p$ is fixed throughout. References
are~\cite{hopkins:K(1)},~\cite{goerss+hopkins:andre},
\cite{goerss+hopkins:summary}, and~\cite{goerss+hopkins:obstructions}.


\subsection{The theory of~$\uptheta$-algebras}\label{sec:theta_algebras}

Let~$\K$ denote the~$p$-adic completion of the topological
complex~K-theory spectrum. In a broader context, this is also known as
the first Lubin-Tate spectrum~$E_1=E(\mathbb{F}_p,\GGmhat)$. It has
an~$\Eoo$ structure such that the (stable) Adams
operations~\hbox{$\uppsi^k\colon\K\rightarrow\K$}~(for~$p$-adic
units~$k$) are~$\Eoo$ maps. Therefore, if~$X$ is any spectrum,
the~$\K$-homology~\hbox{$\K_0X=\pi_0(\K\wedge X)$} also has these
operations. As everything has to be~$\K(1)$-local, smash products such
as~\hbox{$\K\wedge X$} will implicitly be~$\K(1)$-localized.

If~$E$ is a~$\K(1)$-local~$\Eoo$ ring spectrum, the underlying
ring~$\pi_0E$ is a so-called~{\it~$\uptheta$-algebra}. This means that
there are two operations~$\uppsi^p$ and~$\uptheta$ on~$\pi_0E$ which come
about as follows. Given a class~$x$ in~$\pi_0E$, the~$\Eoo$ structure on~$E$
produces a morphism
\begin{displaymath}
	P(x)\colon\B\Sigma_{p+}\longrightarrow E
\end{displaymath}
which restricts to~$x^p$ along the inclusion~$e\colon
\mathrm{S}^0=\B1_{+}\rightarrow\B\Sigma_{p+}$. In the~$\K(1)$-local category
there are two other distinguished morphisms
\begin{displaymath}
	\uppsi^p,\uptheta\colon \mathrm{S}^0\rightarrow\B\Sigma_{p+},
\end{displaymath}
and the `restriction' of~$P(x)$ along these will be denoted
by~$\uppsi^p(x)$ and~$\uptheta(x)$. For example, if~$X$ is a space, the
function spectrum~$\K^X$ is a~$\K(1)$-local~$\Eoo$ ring spectrum
with~$\pi_0(\K^X)=\K^0(X)$, and~$\uppsi^p$ is the~$p$-th (unstable) Adams
operation, whereas~$\uptheta$ is Atiyah's operation~\cite{atiyah}.
In general, the equation~$e=\uppsi^p-p\uptheta$ implies the relation
\begin{displaymath}
	\uppsi^p(x)=x^p+p\uptheta(x)
\end{displaymath}
for all~$x$ in~$\pi_0E$ so that~$\uppsi^p$ is a lift of Frobenius
on~$(\pi_0E)/p$ and~$\uptheta$ is the error term. This also means that
the operation~$\uptheta$ determines the operation~$\uppsi^p$. The converse
holds if the ring is~$p$-torsion free.

While the operation~$\uppsi^p$ is a ring map, the map~$\uptheta$ satisfies the
following equations.
\begin{eqnarray*}
	\uptheta(x+y)&=&
        \uptheta(x)+\uptheta(y)-\sum_{j=1}^{p-1}\binom{p}{j}x^jy^{p-j}\\
	\uptheta(x\cdot y)&=&x^p\uptheta(y)+y^p\uptheta(x)+
	p\uptheta(x)\uptheta(y)\phantom{\binom{p}{j}}
\end{eqnarray*}
These are best phrased saying that~$s=(\id,\uptheta)$ is a ring map to the
ring of Witt vectors of length 2 which defines a section of the first
Witt component~$w_0$. As the other Witt component is given by
\begin{displaymath}
w_1(a_0,a_1)=a_0^p+pa_1, 
\end{displaymath}
composition of the latter with~$s=(\id,\uptheta)$ then gives
$\uppsi^p$.\vspace{-10pt}
\begin{center}
  \mbox{ 
    \xymatrix@C=40pt@R=40pt{& 
    A\ar@{=}[dl]_{}\ar[dr]^{\uppsi^p}\ar[d]|-{(\id,\,\uptheta)} & \\
		A & W_2A\ar[l]^-{w_0}\ar[r]_-{w_1} & A
    } 
  }
\end{center}

Putting the two structures together, if~$E$ is a~$\K(1)$-local~$\Eoo$
ring spectrum, the underlying ring~$\K_0E=\pi_0(K\wedge E)$ is
a~$\uptheta$-algebra with Adams operations. This is the primary algebraic
invariant of the~$\K(1)$-local~$\Eoo$ ring spectrum~$E$, and the obstruction
theory laid out in the following describes how good this invariant is.

There is a graded version of the previous notions which is modeled to
capture the structure present on~$\K_*E$ rather than just~$\K_0E$, see
Definition 2.2.3 in~\cite{goerss+hopkins:obstructions}. In the case at
hand, where we are dealing with even periodic~$E$, these
contain essentially the same information as their degree zero part, and we
will not go into detail here.


\subsection{Existence and uniqueness of~$\Eoo$ structures}

Goerss and Hopkins address the following question: Given a
graded~$\uptheta$-algebra~$B_*$ with Adams operations, when is there
a~$\K(1)$-local~$\Eoo$ ring spectrum~$E$ such that
\begin{equation}\label{eq:K_*}
	\K_*E\cong B_*
\end{equation}
as~$\uptheta$-algebras with Adams operations? Their answer is as follows.

\parbox{\linewidth}{\begin{proposition}\label{prop:structures}
  {\upshape(\cite{goerss+hopkins:summary}, 5.9, and~\cite{goerss+hopkins:obstructions}, 3.3.7)} Given a
  graded~$\uptheta$-algebra~$B_*$ with Adams operations, there exists
  a~$\K(1)$-local~$\Eoo$ ring spectrum~$E$ such that~$\K_*E\cong B_*$
  as~$\uptheta$-algebras with Adams operations if certain obstruction groups
  \begin{displaymath}
    \D^{t+2,t}_{\uptheta\Alg/\K_*}(B_*/\K_*,B_*)
  \end{displaymath}
  vanish for all~$t\geqslant1$. Furthermore, the~$\Eoo$ structure is
  unique up to equivalence if the groups
  \begin{displaymath}
    \D^{t+1,t}_{\uptheta\Alg/\K_*}(B_*/\K_*,B_*)
  \end{displaymath}
  vanish for all~$t\geqslant1$.
\end{proposition}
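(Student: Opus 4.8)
The plan is to deduce this from the moduli-space machinery of Goerss and Hopkins, and I sketch how that machinery is assembled. One organizes all $\K(1)$-local $\Eoo$ ring spectra $E$ equipped with an isomorphism $\K_*E\cong B_*$ into a moduli space $\calEoo(B_*)$; the point is then to compute enough of its homotopy type to decide when $\pi_0\calEoo(B_*)\neq\emptyset$ (existence) and when it is connected (uniqueness up to equivalence). Following Dwyer--Kan--Stover and Bousfield, I would first replace the $\K(1)$-local $\Eoo$ category by an equivalent resolution (or $\mathcal E_2$-)model category of simplicial objects, set up so that cofibrant objects are built from free $\Eoo$ ring spectra on wedges of suspensions of $\K$. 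The crucial input that makes this algebraically tractable is Hopkins' computation of $\K(1)$-local power operations together with McClure's description of Dyer--Lashof operations in $\K$-theory: $\K_*$ of such a free $\Eoo$ ring spectrum is precisely a \emph{free} graded $\uptheta$-algebra with Adams operations. Consequently $\K_*$ carries a cofibrant simplicial resolution in the topological category to a free simplicial resolution in $\uptheta\Alg/\K_*$, and the homotopy theory of the latter is exactly what computes the André--Quillen cohomology groups $\D^{*,*}_{\uptheta\Alg/\K_*}(B_*/\K_*,B_*)$.

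Next I would build a Postnikov-type tower $\calEoo(B_*)\simeq\lim_n\mathcal M_n$, where $\mathcal M_n$ is the moduli space of ``potential $n$-stages'', i.e.\ objects realizing $B_*$ through the $n$-th layer of the resolution. The structural heart of the argument is the analysis of each map $\mathcal M_n\to\mathcal M_{n-1}$: over a chosen component it is classified by a single obstruction class in $\D^{n+2,n}_{\uptheta\Alg/\K_*}(B_*/\K_*,B_*)$ which must vanish for a lift to exist, and when it does, the space of lifts has $\pi_0$ a torsor under $\D^{n+1,n}$ and higher homotopy built from the groups $\D^{\bullet,n}$. Granting this, the conclusion is bookkeeping. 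If all $\D^{t+2,t}$ with $t\geqslant1$ vanish, every layer obstruction is zero, so each $\mathcal M_n\to\mathcal M_{n-1}$ is surjective on components; together with a convergence argument ($\K(1)$-local completeness controlling the homotopy limit and its $\lim^1$-term) this yields $\pi_0\calEoo(B_*)\neq\emptyset$. If in addition all $\D^{t+1,t}$ with $t\geqslant1$ vanish, the spaces of lifts are moreover connected, so each $\mathcal M_n\to\mathcal M_{n-1}$ is a surjection on components with connected homotopy fibres, forcing $\pi_0\calEoo(B_*)$ to be a single point. The tower has to be started: the bottom stage $\mathcal M_0$ (resp.\ $\mathcal M_1$) is nonempty because the $\uptheta$-algebra structure on $B_*$ is realized by hand from a free model, which is why the obstructions only begin in cohomological weight $t\geqslant1$.

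The main obstacle is precisely the structural input invoked above: that the successive layers of the moduli tower are governed by $\D^{*,*}_{\uptheta\Alg/\K_*}$ with exactly those degree shifts. Establishing this requires (i) setting up the resolution model structure and proving that the comparison with simplicial $\uptheta$-algebras is homotopically faithful --- in particular the computation that $\K_*$ of a free $\Eoo$ ring spectrum is the free $\uptheta$-algebra with Adams operations, which is where the genuine content of $K(1)$-local power-operation theory enters; (ii) identifying the space of self-maps (and of automorphisms gained) at each stage with a mapping space in the algebraic category, and recognizing that mapping space's homotopy groups as André--Quillen cohomology via the associated spectral sequence; and (iii) tracking the index conventions carefully so that the existence obstruction lands in $\D^{t+2,t}$ and the uniqueness obstruction in $\D^{t+1,t}$. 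Once (i)--(iii) are in place, the assembly of the tower, the completeness argument for the homotopy limit, and the final counting are formal.
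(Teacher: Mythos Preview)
The paper does not supply its own proof of this proposition: it is quoted as a black box from Goerss--Hopkins (\cite{goerss+hopkins:summary}, 5.9, and \cite{goerss+hopkins:obstructions}, 3.3.7), with only a brief interpretive remark afterwards about viewing the obstruction groups as the $E_2$-term of a spectral sequence converging to the homotopy of a moduli space of realizations. So there is no ``paper's own proof'' to compare against; your sketch is in effect an outline of the content of the cited references.

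That said, your outline is faithful to the Goerss--Hopkins argument: the resolution model structure on simplicial $\Eoo$ ring spectra, the identification of $\K_*$ of free $\Eoo$ algebras with free $\uptheta$-algebras (the crucial $\K(1)$-local input), the tower of moduli spaces of potential $n$-stages, and the placement of the successive obstructions in $\D^{t+2,t}$ and $\D^{t+1,t}$ are exactly the ingredients of their proof. You correctly flag that the real work lies in (i)--(iii), and you do not attempt to carry it out --- which is appropriate, since the paper itself treats the proposition as an imported tool and only \emph{uses} the vanishing criteria (in Proposition~\ref{prop:obstruction_groups} and Theorem~\ref{thm:Eoo_structure}) rather than re-deriving them.
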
}

Uniqueness here does not mean that there are no non-trivial
automorphisms; in fact there usually will be many. It only says that
two such objects will be equivalent, in possibly many different ways.

The theorem can be thought of as the obstruction theory for a spectral
sequence with~$E_2$ term
\begin{displaymath}
	E_2^{s,t}=\D^{s,t}_{\uptheta\Alg/\K_*}(B_*/\K_*,B_*),
\end{displaymath}
trying to converge to the homotopy groups~$\pi_{t-s}$ of an
appropriate space of all such realizations.

Rather than defining the obstruction groups, we will only describe --
in Subsection~\ref{subsec:techniques} -- methods to compute them, as
this will be what is needed for the applications. It should be
mentioned, however, that the letter~$\D$ stands for~`derivations', and
the obstruction groups come about as a topological version of the
Andr\'e-Quillen theory of non-abelian derived derivations. However,
see the next subsection for a hint why derivations come in. 

The coefficients~$M_*$ of the obstruction
groups~$\D^s_{\uptheta\Alg/K_*}(B_*/K_*,M_*)$ for
a~$\uptheta$-algebra~$B_*$ are~{\it~$\uptheta$-modules} in the sense
of Definition~2.2.7 in~\cite{goerss+hopkins:obstructions}. These
are~$B_*$-modules with the structure of a~$\uptheta$-algebra
on~$B_*\oplus M_*$, which is essentially given by a map~$\uptheta$
on~$M_*$ that satisfies~$\uptheta(bm)=\uppsi(b)\uptheta(m)$ if both
have even degree.


\subsection{Spaces of~$\Eoo$ maps}

We will also have occasion to employ the obstruction theory for spaces
of~$\Eoo$ maps between~$\K(1)$-local~$\Eoo$ spectra~$E$ and~$F$. In
fact, this may be easier to grasp than the obstruction theory
for~$\Eoo$ structures, which can be thought of as a theory to realize
the identity as an~$\Eoo$ map. In particular, the obstruction groups
will be the same as before, so that the same computational methods
will apply. The reference for the material here
is~\cite{goerss+hopkins:summary}, Section~4,
and~\cite{goerss+hopkins:obstructions}, Section~2.4.4.

\parbox{\linewidth}{%
  \begin{proposition}\label{prop:maps-existence}
    {\upshape(\cite{goerss+hopkins:summary}, 4.4, \cite{goerss+hopkins:obstructions}, 2.4.15)}
    Let~$E$ and~$F$ be~$\K(1)$-local~$\Eoo$ ring spectra, and
    let~\hbox{$d_*\colon \K_*E\rightarrow \K_*F$} be a map
    of~$\uptheta$-algebras over~$\K_*$. The obstructions to
    realizing~$d_*$ as the~$\K$-homology of an~$\Eoo$
    map~\hbox{$g\colon E\rightarrow F$} lie in groups
  \begin{displaymath}
    \D^{t+1,t}_{\uptheta\Alg/\K_*}(\K_*E/\K_*,\K_*F)
  \end{displaymath}
  for~$t\geqslant1$.
\end{proposition}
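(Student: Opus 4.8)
The statement is the map-space incarnation of the Goerss--Hopkins machinery, and the plan is to realise $\mathrm{Map}_{\Eoo}(E,F)$ as the totalisation of a cosimplicial space built from a free simplicial resolution of $E$, and then to run the associated Bousfield--Kan obstruction theory while identifying the pertinent cohomotopy groups with André--Quillen cohomology of $\uptheta$-algebras. (This also explains the hint made above as to why derivations enter: the homotopy groups of the relevant mapping spaces are derivation groups.)

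First I would equip the category of simplicial $\K(1)$-local $\Eoo$ ring spectra with the Dwyer--Kan--Stover resolution model structure, in which weak equivalences and fibrations are detected by $\K_*$, so that cofibrant objects are assembled from free $\Eoo$ algebras on wedges of (de)suspensions of $\K$. The essential input at this stage --- and, to my mind, the real heart of the matter --- is the computation of $\K_*$ of such a free algebra: by the $\K(1)$-local theory of power operations (the computation of $\K_*\B\Sigma_{p+}$, equivalently of the $\K(1)$-local extended power functor) $\K_*$ of a free $\Eoo$ algebra on a free $\K$-module spectrum is the \emph{free} $\uptheta$-algebra on the corresponding free $\K_*$-module. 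It is precisely this identity that singles out the $\uptheta$-operations of Section~\ref{sec:theta_algebras} as the correct algebraic shadow of the $\Eoo$ structure, and it is what ties the topology to the algebra.

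Next I would choose a cofibrant resolution $E_\bullet$ of $E$ which is levelwise free in the above sense and for which $\K_*E_\bullet$ is a free simplicial resolution of the $\uptheta$-algebra $\K_*E$. Since $F$ is fibrant, $\mathrm{Map}_{\Eoo}(E,F)\simeq\mathrm{Tot}\,\mathrm{Map}_{\Eoo}(E_\bullet,F)$. For each $n$, writing $E_n=\mathbb{P}(M_n)$ with $M_n$ a free $\K$-module, the free--forgetful adjunction gives $\mathrm{Map}_{\Eoo}(E_n,F)\simeq\mathrm{Map}_{\K}(M_n,F)$, whose homotopy groups --- based at the vertices determined by $d_*$ --- are computed in terms of $\K_*F$ by adjunction. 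Assembling over the cosimplicial coordinate, the cosimplicial abelian group $[n]\mapsto\pi_t\,\mathrm{Map}_{\Eoo}(E_n,F)$ is exactly the cochain complex $\mathrm{Der}_{\uptheta\Alg/\K_*}(\K_*E_\bullet,\K_*F)$, where $\K_*F$ carries the $\uppsi$-twisted $\uptheta$-module structure induced by $d_*$; its cohomotopy is by definition $\D^{s,t}_{\uptheta\Alg/\K_*}(\K_*E/\K_*,\K_*F)$. Even periodicity of $E$ and $F$ collapses the internal grading, leaving only the single weight $t$ to track.

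Finally I would run the obstruction theory for the tower $\{\mathrm{Tot}_s\}$: a vertex of $\mathrm{Tot}_0$ compatible with the first cosimplicial identity amounts to the $\uptheta$-algebra map $d_*$ itself, and inductively, given a lift to $\mathrm{Tot}_{t-1}$ --- a "$t$-stage" partial realisation --- the obstruction to lifting it further is a well-defined class in $\pi^{t+1}$ of the cosimplicial group above, i.e.\ in $\D^{t+1,t}_{\uptheta\Alg/\K_*}(\K_*E/\K_*,\K_*F)$, whose vanishing permits the lift. Since everything is $\K(1)$-local the tower converges, and its limit yields an $\Eoo$ map $g$ with $\K_*g=d_*$. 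The two points I expect to demand genuine care are (i) the power-operation computation underpinning the free-algebra identification and (ii) verifying that the cosimplicial cochain complex arising from the mapping spaces agrees, differentials included, with the one defining $\D^{*,*}_{\uptheta\Alg/\K_*}$; both are the substance of Goerss--Hopkins, and once they are in hand the obstruction statement is formal.
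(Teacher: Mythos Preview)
The paper does not prove this proposition: it is stated as a citation of results from \cite{goerss+hopkins:summary} and \cite{goerss+hopkins:obstructions}, and the surrounding text in Section~\ref{sec:GoerssHopkins} only offers the heuristic about derivations (the map $E\to F^{\mathrm{S}^n}$ inducing a $\uptheta$-derivation on $\K$-homology) to motivate why the groups $\D^{s,t}$ appear. So there is no ``paper's own proof'' to compare against.

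That said, your outline is a faithful sketch of the Goerss--Hopkins argument from the cited references: the $\K_*$-based resolution model structure, the identification of $\K_*$ of a free $\Eoo$ algebra with a free $\uptheta$-algebra (which is indeed the linchpin, resting on the $\K(1)$-local computation of extended powers), the cosimplicial mapping space and its $\mathrm{Tot}$ tower, and the Bousfield--Kan obstruction calculus with cohomotopy identified as Andr\'e--Quillen cohomology. Your two flagged points of care are exactly the nontrivial inputs. One small refinement: the internal index $t$ does not simply disappear under even periodicity; it records the shift $\Sigma^{-t}\K_*F$ in the coefficient $\uptheta$-module (cf.\ the paper's discussion after Proposition~\ref{prop:maps-uniqueness} and the use of $\Sigma^{-1}A_*$ in the proof of Proposition~\ref{prop:Eoo_maps}), so the bigrading is genuine even for even-periodic spectra.
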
}

Assuming that such a~$g$ exists, the obstructions for uniqueness lie
in groups
\begin{displaymath}
  \D^{t,t}_{\uptheta\Alg/\K_*}(\K_*E/\K_*,\K_*F)
\end{displaymath}
for~$t\geqslant1$. 

Again this is just the beginning of a spectral sequence which computes
the homotopy groups of the component~$\calEoo(E,F)_g$ of~$g$ in the
space of~$\Eoo$ maps. The idea behind the construction of this
spectral sequence is to use the cosimplicial resolution of the
source~$E$ by the triple of the standard adjunction between~$\Eoo$
ring spectra and~$\K$-algebras. The precise statement is as follows.

\parbox{\linewidth}{\begin{proposition}
	\label{prop:maps-uniqueness}
	{\upshape(\cite{goerss+hopkins:summary}, 4.3, \cite{goerss+hopkins:obstructions}, 2.4.14)}
  Given an~$\Eoo$ map~$g\colon E\rightarrow F$ of~$\K(1)$-local~$\Eoo$
  ring spectra, there is a spectral sequence
  \begin{displaymath}
    \D^{s,t}_{\uptheta\Alg/\K_*}(\K_*E/\K_*,\K_*F)
    \Longrightarrow\pi_{t-s}	\calEoo(E,F)_g
  \end{displaymath}
  converging to the homotopy groups of the component of~$g$ in the
  space of~$\Eoo$ maps from~$E$ to~$F$.
\end{proposition}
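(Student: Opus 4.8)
The plan is to realize the mapping space as the totalization of a cosimplicial space built from a free resolution of the source~$E$, and then to read off the groups~$\D^{s,t}$ from the~$E_2$-page of the associated Bousfield--Kan spectral sequence; this is the argument of \cite{goerss+hopkins:summary}, Section~4, and \cite{goerss+hopkins:obstructions}, Section~2.4. Write~$\mathbb{P}$ for the free~$\K(1)$-local~$\Eoo$ ring spectrum functor on~$\K$-modules, left adjoint to the forgetful functor. First I would choose a cofibrant resolution~$E^\bullet\to E$ in the resolution model structure on cosimplicial~$\K(1)$-local~$\Eoo$ ring spectra --- the one whose weak equivalences are the~$\K_*$-isomorphisms --- in which each~$E^n$ is a coproduct of spectra of the form~$\mathbb{P}(\K\wedge\bigvee S^{m})$, so that the induced augmented simplicial object~$\K_*E^\bullet\to\K_*E$ is a free simplicial resolution of the~$\uptheta$-algebra~$\K_*E$ over~$\K_*$. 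Existence of such resolutions is part of the Goerss--Hopkins package of Dwyer--Kan--Stover resolutions adapted to~$\K_*$; see \cite{goerss+hopkins:andre}.

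Next I would base the cosimplicial space~$[n]\mapsto\calEoo(E^n,F)$ at the point~$g^\bullet$ whose~$n$-th coordinate is the composite~$E^n\to E\xrightarrow{\,g\,}F$. Since~$E^\bullet\to E$ is a resolution and~$F$ is fibrant, the natural map
\[
  \calEoo(E,F)\;\longrightarrow\;\mathrm{Tot}\,\calEoo(E^\bullet,F)=\mathrm{holim}_\Delta\calEoo(E^\bullet,F)
\]
is a weak equivalence carrying the component of~$g$ onto the component of~$g^\bullet$; this descent property is exactly what the resolution model structure is set up to provide. The Bousfield--Kan spectral sequence of this pointed cosimplicial space then reads
\[
  E_2^{s,t}=\pi^s\!\big(\pi_t\calEoo(E^\bullet,F),\,g^\bullet\big)\ \Longrightarrow\ \pi_{t-s}\big(\calEoo(E,F),\,g\big).
\]

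It remains to identify~$E_2^{s,t}$ with~$\D^{s,t}_{\uptheta\Alg/\K_*}(\K_*E/\K_*,\K_*F)$, where~$\K_*F$ carries the~$\K_*E$-module structure induced by~$\K_*(g)$. Freeness of the terms~$E^n$ is what makes this possible: for a free object~$\mathbb{P}(\K\wedge Y)$ the homotopy groups of~$\calEoo(\mathbb{P}(\K\wedge Y),F)$ are corepresented by~$Y$ and hence computed from~$\K_*Y$ and~$\pi_*F$ as a group of~$\uptheta$-derivations, so that the cochain complex~$n\mapsto\pi_t\calEoo(E^n,F)$ is the very complex computing the derived functors of derivations of~$\uptheta$-algebras, i.e.\ the groups~$\D^{s,t}$. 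The device that organizes this comparison is the spiral exact sequence of \cite{goerss+hopkins:obstructions}: it relates the~$E_2$-homotopy of a cosimplicial~$\Eoo$ ring spectrum to its~$\uptheta$-module-valued natural homotopy, and then, via the free resolution, to the Andr\'e--Quillen cohomology of~$\uptheta$-algebras.

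I expect this identification to be the main obstacle: making it precise requires knowing that the algebraic category of graded~$\uptheta$-algebras with Adams operations over~$\K_*$ is well enough behaved to carry a cotangent-complex formalism --- enough free objects, a reasonable abelianization, stability under base change --- and that the spiral sequence collapses on free resolutions so that the bidegrees match. A final point that needs attention is convergence: the Bousfield--Kan spectral sequence of a mapping space converges only conditionally in general, and one uses here that~$F$ is~$\K(1)$-local and that the resolution is assembled from~$\K$-free pieces to guarantee that the totalization really computes the homotopy groups of the component of~$g$.
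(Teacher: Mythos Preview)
The paper does not supply its own proof of this proposition: it is quoted from Goerss and Hopkins and surrounded only by a brief heuristic. Your outline is correct and is precisely the Goerss--Hopkins argument the paper is invoking, so there is no gap to flag.

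That said, it is worth noting two small differences in emphasis between your sketch and the paper's explanatory remarks. First, where you take an arbitrary cofibrant resolution of~$E$ in the resolution model structure built from free $\Eoo$ ring spectra $\mathbb{P}(\K\wedge\bigvee S^m)$, the paper points specifically to the triple (monadic) resolution coming from the adjunction between $\Eoo$ ring spectra and $\K$-algebras; of course any two such resolutions give the same $E_2$-term, so this is a matter of taste. Second, and more substantively, the paper identifies the $E_2$-term with derived $\uptheta$-derivations by a different device than yours: rather than computing $\pi_t\calEoo(E^n,F)$ via freeness of the source, it adjoints a map $\mathrm{S}^n\to\calEoo(E,F)$ based at~$g$ to an $\Eoo$ map $E\to F^{\mathrm{S}^n}$, observes that $\K_*F^{\mathrm{S}^n}\cong\K_*F\oplus\Sigma^{-n}\K_*F$ as a square-zero extension, and reads off a $\uptheta$-derivation $\K_*E\to\Sigma^{-n}\K_*F$ from the second component. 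This dual point of view --- deforming the target rather than resolving the source --- makes the appearance of Andr\'e--Quillen cohomology transparent without invoking the spiral exact sequence, though it is of course equivalent to your identification.
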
}

It it easy to see where derivations come into the picture here. If
$g\colon E\rightarrow F$ is an~$\Eoo$ map, and
\begin{displaymath}
	\mathrm{S}^n\longrightarrow\mathcal{E}_\infty(E,F)
\end{displaymath}
is a map based at~$g$ for some~$n\geqslant0$, its adjoint is an~$\Eoo$ map
\begin{equation}\label{eq:induces_derivation}
	E\longrightarrow F^{\mathrm{S}^n}.
\end{equation}
The~$\K$-homology of~$F^{\mathrm{S}^n}$ takes the form
\begin{displaymath}
	\K_*F^{\mathrm{S}^n}\cong \K_*F\oplus\Sigma^{-n}\K_*F
\end{displaymath}
for some~$\uptheta$-module~$\Sigma^{-n}\K_*F$ over~$\K_*F$, a shifted
and twisted copy of~$\K_*F$ itself,
see~\cite{goerss+hopkins:obstructions}, Example~2.2.9, where the
notation~$\Omega$ is used instead of~$\Sigma^{-1}$. The map induced
by~\eqref{eq:induces_derivation} in~$\K$-homology is~$\K_*g$ in the
first factor, and a derivation~\hbox{$\K_*E\rightarrow \K_*F$} in the
second factor. In fact, it is a~$\uptheta$-derivation,
see~\cite{goerss+hopkins:obstructions}, Section~2.4.3. The obstruction
groups are the derived functors of these.


\subsection{Techniques for computing the obstruction
  groups}\label{subsec:techniques}
  
If~$B_*$ is an even periodic~$\uptheta$-algebra, the (cohomology of
the) cotangent complex~$\L_{B_*/\K_*}$ inherits the structure of
a~$\uptheta$-module over~$B_*$. This is easy to see for the cotangent
module itself: consider the isomorphism between
derivations~\hbox{$B_*\rightarrow M_*$} and algebra
maps~$B_*\rightarrow B_*\oplus M_*$ over~$B_*$, where~$\uptheta$ acts
on the right hand side by
\begin{eqnarray*}
  \uptheta(b,m)
  &=&\uptheta((b,0)+(0,m))\\
  &=&\uptheta(b,0)+\uptheta(0,m)-\frac1p\sum_{j=1}^{p-1}\binom{p}{j}(b,0)^j(0,m)^{p-j}\\
  &=&(\uptheta(b),0)+(0,\uptheta(m))-(b^{p-1},0)(0,m)\\
  &=&(\uptheta(b),\uptheta(m)-b^{p-1}m)
\end{eqnarray*}
Writing~$m=D(b)$, this shows that~$\uptheta$ acts on a derivation~$D$ as
\begin{displaymath}
	(\uptheta D)b=D(\uptheta b)+b^{p-1}Db,
\end{displaymath}
see~\cite{goerss+hopkins:obstructions}, Section~2.4.3. 

This observation allows us to treat the two
problems separately: that of deforming the algebra first and that of
deforming the~$\uptheta$-action later. For practical purposes, this
manifests in a composite functor spectral sequence which takes the
following form.

\begin{proposition}\label{prop:GSS1}
  {\upshape(\cite{goerss+hopkins:obstructions}, (2.4.7))}
  There is a spectral sequence
  \begin{displaymath}
    \Ext^m_{\uptheta\Mod/\K_*}(\H^n(\L_{B_*/\K_*}),M_*)
    \Longrightarrow\D^{m+n}_{\uptheta\Alg/\K_*}(B_*/\K_*,M_*).
  \end{displaymath}
\end{proposition}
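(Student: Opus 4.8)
The proof is purely formal, building on the observation made just before the statement: the cotangent complex $\L_{B_*/\K_*}$ carries the structure of a complex of $\uptheta$-modules over $B_*$, not merely of $B_*$-modules. The plan is to realize the André–Quillen-style derived derivation groups $\D^{*}_{\uptheta\Alg/\K_*}(B_*/\K_*,M_*)$ as the right derived functors of a composite and then invoke the Grothendieck spectral sequence for a composite functor. Concretely, one factors the functor computing these groups as follows. Pick a simplicial resolution $P_\bullet\to B_*$ of $B_*$ by free $\uptheta$-algebras over $\K_*$; then $\D^{*}_{\uptheta\Alg/\K_*}(B_*/\K_*,M_*)$ is the cohomology of the cosimplicial object $n\mapsto \operatorname{Der}_{\uptheta\Alg/\K_*}(P_n,M_*)$. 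The key point is that for a \emph{free} $\uptheta$-algebra $P$, there is a natural isomorphism $\operatorname{Der}_{\uptheta\Alg/\K_*}(P,M_*)\cong \operatorname{Hom}_{\uptheta\Mod/\K_*}(\Omega_{P/\K_*}, M_*)$, where $\Omega_{P/\K_*}$ is the module of $\uptheta$-Kähler differentials — i.e., the cotangent module, equipped with its canonical $\uptheta$-module structure. Thus the derivation functor is the composite of (i) the Quillen homology functor $B_*\mapsto \L_{B_*/\K_*}$, valued in (the derived category of) $\uptheta$-modules, followed by (ii) the functor $\operatorname{Hom}_{\uptheta\Mod/\K_*}(-,M_*)$.

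The second step is to check the hypotheses of the Grothendieck composite functor spectral sequence: the first functor should send the relevant projective/cofibrant objects to objects that are acyclic for the second functor. Here the first functor sends a free $\uptheta$-algebra $P$ to its cotangent module $\Omega_{P/\K_*}$, which is a free (hence projective) $\uptheta$-module over $P$, and base change along $P\to B_*$ keeps it projective over $B_*$; projective $\uptheta$-modules are exactly the objects computing $\operatorname{Ext}^0$ with no higher terms, so the acyclicity hypothesis holds. Feeding this into the Grothendieck machine produces a spectral sequence with
\[
    E_2^{m,n}=\Ext^m_{\uptheta\Mod/\K_*}\bigl(\H^n(\L_{B_*/\K_*}),M_*\bigr)
    \Longrightarrow \D^{m+n}_{\uptheta\Alg/\K_*}(B_*/\K_*,M_*),
\]
which is exactly the asserted spectral sequence. (In the graded even periodic setting one works throughout with the graded versions of all these categories, but no new phenomena arise.)

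Strictly speaking, since the reference \cite{goerss+hopkins:obstructions} supplies this as their equation (2.4.7), the honest task is only to recall their construction, so I would present the above as the conceptual skeleton and cite \emph{loc.\ cit.}\ for the details of the model-categorical bookkeeping (the precise simplicial model structure on $\uptheta$-algebras, the identification of the cotangent complex with Quillen homology, and the convergence). The one place where genuine care is needed — and the step I expect to be the main obstacle — is verifying that the $\uptheta$-module structure on the cotangent \emph{complex} $\L_{B_*/\K_*}$ is well defined and functorial, i.e.\ that the $\uptheta$-action computed on the cotangent module of a free resolution in each simplicial degree, as in the displayed computation preceding the proposition, is compatible with the simplicial structure maps and independent of the chosen resolution. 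This is what makes the target category $\uptheta\Mod/\K_*$ rather than just $B_*\Mod$, and it is the reason the spectral sequence has the shape it does; everything else is the standard derived-functor formalism.
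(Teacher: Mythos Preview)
The paper does not prove this proposition at all: it is stated with a bare citation to \cite{goerss+hopkins:obstructions}, (2.4.7), preceded only by the remark that the $\uptheta$-structure on the cotangent complex ``allows us to treat the two problems separately'' via a composite functor spectral sequence. Your sketch of the Grothendieck composite-functor argument is the standard one and is essentially what lies behind the cited result; your identification of the one genuine technical point --- that the $\uptheta$-module structure on $\L_{B_*/\K_*}$ is well defined and functorial --- is apt, and your deferral to \emph{loc.\ cit.}\ for the model-categorical details matches exactly what the paper does.
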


In our cases of interest, the algebra~$B_*$ will always be smooth over
$\K_*$, and this implies that the spectral sequence degenerates to
give the isomorphism
\begin{equation}\label{eq:smooth}
    \D^s_{\uptheta\Alg/\K_*}(B_*/\K_*,M_*)\cong
    \Ext^s_{\uptheta\Mod/\K_*}(\Omega_{B_*/\K_*},M_*)
\end{equation}
from~\cite{goerss+hopkins:obstructions} (2.4.9).

In the same vein, the action of the~$p$-adic units through the Adams
operations~$\uppsi^k$ on~$M_*$ can be separated from the action
of~$\uptheta$: As morphisms have to be compatible with both, there is
a Grothendieck spectral sequence as follows.

\begin{proposition}\label{prop:GSS2}
  There is a spectral sequence
\begin{displaymath}
    \Ext^m_{A_*[\uptheta]}(\Omega_{A_*/\K_*},\H^n(\ZZ_p^{\times},M_*))
    \Longrightarrow\Ext^{m+n}_{\uptheta\Mod/\K_*}(\Omega_{A_*/\K_*},M_*)
\end{displaymath}
\end{proposition}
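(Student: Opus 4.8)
The statement is a Grothendieck spectral sequence for a composite functor, so the strategy is to identify the two functors being composed, check that they take injectives to acyclics, and then quote the standard Grothendieck machinery. The key observation is that a $\uptheta$-module over $\K_*$ carries two pieces of structure: an action of $\uptheta$ (equivalently, after the usual Witt-vector reformulation, the operation $\uppsi^p$) and the action of the full group $\ZZ_p^\times$ of $p$-adic units through the Adams operations $\uppsi^k$. These two are linked only by the requirement that morphisms respect both; there is no further compatibility imposed between $\uptheta$ and a general $\uppsi^k$ beyond the $\uppsi^p$ that $\uptheta$ determines. So the category of $\uptheta$-modules over $\K_*$ sits as modules over a ring that is, roughly, $A_*[\uptheta]$ together with a compatible $\ZZ_p^\times$-action, and $\Ext^*_{\uptheta\Mod/\K_*}(-,M_*)$ should be computable by first taking $\ZZ_p^\times$-invariants (more precisely, $\H^*(\ZZ_p^\times,-)$) of the coefficient module and then taking $\Ext^*_{A_*[\uptheta]}$ against the result.

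First I would make the source object precise: here $A_*$ is the even periodic $\uptheta$-algebra underlying the situation (so $\Omega_{A_*/\K_*}$ is the relevant cotangent module), and the $\Ext$-groups on the right are those appearing in \eqref{eq:smooth}, i.e. computed in the category of $\uptheta$-modules over $\K_*$. I would then exhibit the forgetful-and-invariants functor $M_* \mapsto \H^0(\ZZ_p^\times, M_*)$ from $\uptheta$-modules over $\K_*$ to $A_*[\uptheta]$-modules (the point being that $\ZZ_p^\times$ acts through automorphisms commuting appropriately, so the fixed points retain the $\uptheta$-action), and note that $\Hom_{\uptheta\Mod/\K_*}(\Omega_{A_*/\K_*}, M_*) \cong \Hom_{A_*[\uptheta]}(\Omega_{A_*/\K_*}, \H^0(\ZZ_p^\times, M_*))$, since a $\uptheta$-module map out of $\Omega_{A_*/\K_*}$ is the same as an $A_*[\uptheta]$-linear map whose image is $\ZZ_p^\times$-invariant — the source $\Omega_{A_*/\K_*}$ carries (or can be taken to carry) the relevant equivariance so that equivariance of the map is automatic, or else one restricts to the invariant part of the target. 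The derived functors of $\H^0(\ZZ_p^\times,-)$ are the continuous group cohomology $\H^n(\ZZ_p^\times,-)$, and the derived functors of $\Hom_{A_*[\uptheta]}(\Omega_{A_*/\K_*},-)$ are $\Ext^m_{A_*[\uptheta]}(\Omega_{A_*/\K_*},-)$; the Grothendieck spectral sequence of the composite is exactly the asserted one.

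The technical hypothesis to verify is that $\H^0(\ZZ_p^\times,-)$, i.e. the functor sending a $\uptheta$-module to its $p$-adic-unit invariants, carries injective $\uptheta$-modules over $\K_*$ to objects that are acyclic for $\Hom_{A_*[\uptheta]}(\Omega_{A_*/\K_*},-)$, i.e. to $\Ext$-acyclics over $A_*[\uptheta]$. The cleanest route is to check that the forgetful functor from $\uptheta\Mod/\K_*$ to $A_*[\uptheta]$-modules-with-$\ZZ_p^\times$-action has an exact left adjoint (induction/free construction), so it preserves injectives; then one is reduced to the classical fact that for a module with compatible group action, the underlying $\ZZ_p^\times$-invariants of an injective are still suitably injective, which is standard for finite or profinite groups acting on module categories (the order of $\ZZ_p^\times$ being a unit issue is handled $p$-adically by the pro-$p$ plus prime-to-$p$ decomposition $\ZZ_p^\times \cong \mu_{p-1}\times(1+p\ZZ_p)$). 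I expect this adjunction-and-injectivity bookkeeping to be the main obstacle: everything else is a formal invocation of the Grothendieck spectral sequence once the composite of functors and the acyclicity condition are in place. Since this parallels the composite-functor spectral sequence \eqref{eq:smooth}/Proposition~\ref{prop:GSS1} already quoted from \cite{goerss+hopkins:obstructions}, I would present the proof as a short verification of the hypotheses of the Grothendieck machine rather than a construction from scratch.
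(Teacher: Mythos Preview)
Your proposal is correct and matches the paper's treatment: the paper states Proposition~\ref{prop:GSS2} without proof, merely flagging it as a Grothendieck spectral sequence arising from separating the~$\ZZ_p^\times$-action from the~$\uptheta$-action (and citing the consequence~\eqref{eq:induced} from \cite{goerss+hopkins:obstructions}, (2.4.10)). You have simply spelled out the verification of the composite-functor hypotheses that the paper leaves implicit, so there is nothing to compare.
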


Thus, if the action of the~$p$-adic units through the Adams
operations~$\uppsi^k$ on~$M_*$ is induced, we may also eliminate this
action from the picture to obtain
\begin{equation}\label{eq:induced}
	    \Ext^s_{\uptheta\Mod/\K_*}(\Omega_{A_*/\K_*},M_*)
	    \cong
	    \Ext^s_{A_*[\uptheta]}(\Omega_{A_*/\K_*},M_*^{\ZZ_p^{\times}})
\end{equation}
as in~\cite{goerss+hopkins:obstructions} (2.4.10). The right hand side
turns out to be manageable in the cases relevant here.


\section{Applications to~$\Eoo$ structures on K3
    spectra}\label{sec:applications}
    
  Let~$(X,L)$ be a~$p$-primitively polarized K3 surface as before. In
  this section we will see that there is a unique~$\Eoo$ structure on
  the K3 spectrum~$E(X,L)$ over the formal completion~$\O(S_L)$ of the
  local ring of~$\Mord$ at~$(X,L)$. It should be emphasized that,
  while the existence of~$E(X,L)$ as ring spectrum up to homotopy is
  known from~\cite{szymik:K3spectra}, this information will not be
  needed here: the spectrum with an~$\Eoo$ structure is shown to exist
  here.

\subsection{A calculation of the obstruction groups}

We would like to have an even periodic~$\Eoo$ ring spectrum~$E(X,L)$ with
\begin{displaymath}
	\pi_{2n}E(X,L)\cong\H^0(S_L,\omega^{\otimes n})
\end{displaymath}
and
\begin{equation}\label{eq:B}
  \K_{2n}E(X,L)\cong\H^0(T_L,\omega^{\otimes n}).
\end{equation}

Let us write~$A_*$ for the even graded ring
with~$\H^0(S_L,\omega^{\otimes n})$ in degree~$2n$, and~$B_*$ for the
even graded ring with~$\H^0(T_L,\omega^{\otimes n})$ in
degree~$2n$. As has been shown in Section~\ref{sec:theta}, the latter
is a~$\uptheta$-algebra with Adams operations over~$\K_*$ and can
therefore serve as an input for the Goerss-Hopkins obstruction
theory. We shall now study the obstruction groups in the range of
interest.

\begin{proposition} 
  \label{prop:obstruction_groups}
  For the graded~$\uptheta$-algebra~$B_*$ as above, the obstruction groups
  \begin{displaymath}
    \D^{s,t}_{\uptheta\Alg/\K_*}(B_*/\K_*,B_*)
  \end{displaymath}
  vanish for~$s\geqslant 2$.
\end{proposition}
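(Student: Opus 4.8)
The plan is to reduce the computation of $\D^{s,t}_{\uptheta\Alg/\K_*}(B_*/\K_*, B_*)$ for $s \geqslant 2$ to an $\Ext$ computation over a much simpler ring, using the three spectral-sequence reductions assembled in Subsection~\ref{subsec:techniques}. First I would invoke the fact, recorded in Theorems~\ref{thm:cc} and~\ref{thm:c1}, that $S_L$ is formally smooth over $W$ with the explicit canonical coordinates, hence $A_*$ is a smooth $\K_*$-algebra; since $T_L \to S_L$ is a $\ZZ_p^\times$-Galois covering, $B_*$ is smooth over $\K_*$ as well. By the degeneration~\eqref{eq:smooth} this already collapses $\D^s_{\uptheta\Alg/\K_*}(B_*/\K_*, B_*)$ to $\Ext^s_{\uptheta\Mod/\K_*}(\Omega_{B_*/\K_*}, B_*)$. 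Next I would observe that the Adams-operation action of $\ZZ_p^\times$ on $B_*$ is induced — this is exactly the statement that $\Mtriv \to \Mord$ is Galois with group $\ZZ_p^\times$ acting freely and transitively on fibers, so $B_* = \O(T_L)$ is an induced module over $A_* = \O(S_L)$ — and apply~\eqref{eq:induced} to rewrite the $\Ext$ as $\Ext^s_{A_*[\uptheta]}(\Omega_{A_*/\K_*}, B_*^{\ZZ_p^\times})$, where the fixed points are just $A_*$ itself.

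With the computation now taking place over the ring $A_*[\uptheta]$ — i.e., $A_*$ with a single extra operator $\uptheta$, equivalently the twisted polynomial-type ring governing the Katz lift $\uppsican$ via $\uppsican(f) = f^p + p\uptheta(f)$ — the key remaining point is that $\Omega_{A_*/\K_*}$ has a very small free resolution as an $A_*[\uptheta]$-module. Here I would use the canonical coordinates: $\Omega_{S_L/W}$ is free on the classes $\omega_j = \d\!\log(q_j)$, and $\uppsican$ acts on these by $\uppsican^*\omega_j = p\,\omega_j$ (differentiating $q_j \mapsto q_j^p$), so the $\uptheta$-twist on the cotangent module is controlled by multiplication by $p$ together with the $x^{p-1}$-term from the formula $(\uptheta D)b = D(\uptheta b) + b^{p-1}Db$. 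The expectation is that $\Omega_{A_*/\K_*}$ admits a two-term free resolution over $A_*[\uptheta]$ — a "Koszul-type" resolution encoding the one relation that ties $\uptheta$ to the generators — so that $\Ext^s_{A_*[\uptheta]}(\Omega_{A_*/\K_*}, A_*) = 0$ for $s \geqslant 2$ for purely homological-dimension reasons. Combined with the reductions of the previous paragraph, this gives the vanishing of $\D^{s,t}_{\uptheta\Alg/\K_*}(B_*/\K_*, B_*)$ for all $s \geqslant 2$ and all $t$.

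The main obstacle I anticipate is the last step: producing, and verifying exactness of, the short free resolution of $\Omega_{A_*/\K_*}$ over $A_*[\uptheta]$. One must be careful that $\uptheta$ is not a ring homomorphism but a twisted derivation (it satisfies the additive and multiplicative identities of Subsection~\ref{sec:theta_algebras}), so $A_*[\uptheta]$ is a noncommutative ring and "free resolution" must be understood in the category of $\uptheta$-modules; the relevant homological algebra is that of Goerss--Hopkins' $\uptheta\Mod$ category. The cleanest route is probably to exhibit $\Omega_{A_*/\K_*}$ explicitly as the cokernel of a map of free $\uptheta$-modules built from the $\omega_j$ and the $p$-divisibility of $\uppsican^*\omega_j$, check this is a resolution by a direct (if slightly fiddly) computation in canonical coordinates, and conclude that $A_*[\uptheta]$-modules arising this way have projective dimension at most $1$. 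Everything else — smoothness of $B_*$, inducedness of the $\ZZ_p^\times$-action, and the two collapses — is immediate from the structural results already in hand.
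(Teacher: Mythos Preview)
Your proposal is correct and follows essentially the same route as the paper: reduce via smoothness and~\eqref{eq:smooth}, pass from $B_*$ to $A_*$ using the Galois property together with~\eqref{eq:induced}, and then bound the projective dimension of $\Omega_{A_*/\K_*}$ as an $A_*[\uptheta]$-module by~$1$. The ``main obstacle'' you anticipate dissolves without any coordinate computation: the paper simply writes down the resolution
\[
0\longrightarrow A_*[\uptheta]\otimes_{A_*}\Omega_{A_*/\K_*}\overset{\uptheta}{\longrightarrow} A_*[\uptheta]\otimes_{A_*}\Omega_{A_*/\K_*}\longrightarrow\Omega_{A_*/\K_*}\longrightarrow0,
\]
and since $\Omega_{A_*/\K_*}$ is $A_*$-projective (by smoothness of $S_L$) the two left terms are $A_*[\uptheta]$-projective, giving the vanishing for $s\geqslant 2$ directly.
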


\begin{proof} 
  Using the techniques from Subsection 2.4.3
  in~\cite{goerss+hopkins:obstructions}, as recalled here in
  Subsection~\ref{subsec:techniques}, this can be seen as follows.

  First, as~$T_L$ is Galois
  over~$S_L$, and~$S_L$ smooth over~$\ZZ_p$, the cotangent
  complex~$\L_{B_*/\K_*}$ is discrete, equivalent
  to~$\Omega_{B_*/\K_*}$ concentrated in degree~$0$. Therefore, 
  \begin{equation}\label{eq:eins}
    \D^s_{\uptheta\Alg/\K_*}(B_*/\K_*,B_*)
    \cong
    \Ext^s_{\uptheta\Mod/\K_*}(\Omega_{B_*/\K_*},B_*)
  \end{equation}
  as in~\eqref{eq:smooth}. 

  Second, again since~$T_L$ is Galois over~$S_L$, we
  have~$\Omega_{B_*/\K_*}\cong\Omega_{A_*/\K_*}$ by change-of-rings,
  and obtain
  \begin{equation}\label{eq:zwei}
    \Ext^s_{\uptheta\Mod/\K_*}(\Omega_{B_*/\K_*},B_*)
    \cong
    \Ext^s_{\uptheta\Mod/\K_*}(\Omega_{A_*/\K_*},B_*).
  \end{equation}
  
  Third, we may use~\eqref{eq:induced} to get
  \begin{equation}\label{eq:drei}
    \Ext^s_{\uptheta\Mod/\K_*}(\Omega_{A_*/\K_*},B_*)
    \cong
    \Ext^s_{A_*[\uptheta]}(\Omega_{A_*/\K_*},A_*).
       \end{equation}

  Putting~\eqref{eq:eins},~\eqref{eq:zwei}, and~\eqref{eq:drei}
  together yields an isomorphism
  \begin{displaymath}
    \D^s_{\uptheta\Alg/\K_*}(B_*/\K_*,B_*)
    \cong
    \Ext^s_{A_*[\uptheta]}(\Omega_{A_*/\K_*},A_*).
  \end{displaymath}
  
  The~$\Ext$-groups into any module~$M_*$ can be calculated by the resolution
  \begin{displaymath}
    0\longrightarrow
    A_*[\uptheta]\otimes_{A_*}\Omega_{A_*/\K_*}
    \overset{\uptheta}{\longrightarrow}
    A_*[\uptheta]\otimes_{A_*}\Omega_{A_*/\K_*}
    \longrightarrow
    \Omega_{A_*/\K_*}
    \longrightarrow0
  \end{displaymath}
  of~$\Omega_{A_*/\K_*}$.  As~$S_L$ is smooth over~$\ZZ_p$ the
  module~$\Omega_{A_*/\K_*}$ is projective, so that
  \begin{displaymath}
    \Ext_{A_*[\uptheta]}^s(A_*[\uptheta]\otimes_{A_*}\Omega_{A_*/\K_*},M_*)\cong\Ext_{A_*}^s(\Omega_{A_*/\K_*},M_*)=0
  \end{displaymath}
  for all~$s\geqslant1$.  It follows
  that~$\Ext^s_{A_*[\uptheta]}(\Omega_{A_*/\K_*},A_*)$ is zero for
  all~$s\geqslant2$.
\end{proof}

It should be noted that the vanishing of the obstruction groups
for~$s\geqslant2$ implies that the spectral sequences mentioned in the
previous section degenerate at~$E_2$. As we will see, for the even
periodic spectra we will be dealing with, there will neither be
extension problems, so that the homotopy groups of the target can
always be identified with certain obstruction groups in the present
situation.


\subsection{Existence and uniqueness of~$\Eoo$ structures}

The vanishing of the obstruction groups has the following consequence.

\begin{theorem}\label{thm:Eoo_structure}
	For each~$p$-primitively polarized K3 surface~$(X,L)$ as before,
  there is an even periodic~$\K(1)$-local~$\Eoo$
  ring spectrum~$E(X,L)$ such that
  \begin{eqnarray*}
    \pi_{2n}E(X,L)&\cong&\H^0(S_L,\omega^{\otimes n}),\\
    \K_{2n}E(X,L)&\cong&\H^0(T_L,\omega^{\otimes n}).
  \end{eqnarray*}
  The~$\Eoo$ structure is unique up to equivalence.
\end{theorem}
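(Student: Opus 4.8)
The plan is to feed the graded $\uptheta$-algebra with Adams operations
\[
  B_* = \left(\,\H^0(T_L,\omega^{\otimes n})\,|\,n\in\ZZ\,\right),
\]
constructed in Section~\ref{sec:theta}, into the Goerss--Hopkins machine of Proposition~\ref{prop:structures}, and to dispose of all the relevant obstruction groups in one stroke using Proposition~\ref{prop:obstruction_groups}. First I would record that $B_*$ is an even periodic graded $\uptheta$-algebra with Adams operations over $\K_*$, which is exactly the output of Section~\ref{sec:theta}, so that Proposition~\ref{prop:structures} is applicable. It then asserts the existence of a $\K(1)$-local $\Eoo$ ring spectrum $E=E(X,L)$ with $\K_*E\cong B_*$ as $\uptheta$-algebras with Adams operations provided the groups $\D^{t+2,t}_{\uptheta\Alg/\K_*}(B_*/\K_*,B_*)$ vanish for all $t\geqslant1$, and the uniqueness of this $\Eoo$ structure up to equivalence provided the groups $\D^{t+1,t}_{\uptheta\Alg/\K_*}(B_*/\K_*,B_*)$ vanish for all $t\geqslant1$.

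For $t\geqslant1$ both superscripts $t+2$ and $t+1$ are at least $2$, so Proposition~\ref{prop:obstruction_groups} delivers the required vanishing immediately. This already yields an even periodic $\K(1)$-local $\Eoo$ ring spectrum $E(X,L)$, unique up to equivalence, with $\K_{2n}E(X,L)\cong\H^0(T_L,\omega^{\otimes n})$, which is the second displayed isomorphism in the statement.

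It remains to identify the homotopy groups. Since $E(X,L)$ is $\K(1)$-local it is recovered from its $\K$-homology equipped with the stable Adams action: $\K(1)$-locally, $E(X,L)\simeq(\K\wedge E(X,L))^{h\ZZ_p^\times}$, with $\ZZ_p^\times\cong\Aut(\GGmhat)$ acting through the $\uppsi^k$. Because $\K\wedge E(X,L)$ is even and $\ZZ_p^\times$ has cohomological dimension one at the odd prime $p$, the homotopy fixed point spectral sequence collapses with no extension problems, so $\pi_{2n}E(X,L)\cong(\K_{2n}E(X,L))^{\ZZ_p^\times}=\H^0(T_L,\omega^{\otimes n})^{\ZZ_p^\times}$. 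As $T_L\to S_L$ is Galois with group $\ZZ_p^\times$, taking invariants returns $\H^0(S_L,\omega^{\otimes n})$; this is precisely the descent relating $B_*$ to $A_*$ already used in Section~\ref{sec:applications}. Combining the three steps gives the theorem.

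The genuine obstacle here was the computation of the André--Quillen-type derivation groups, and that has already been carried out in Proposition~\ref{prop:obstruction_groups}; what is left is essentially bookkeeping. The one point that needs a word of care is the passage from $\K_*E(X,L)$ back to $\pi_*E(X,L)$: one must know that the even periodic $\K(1)$-local $\Eoo$ ring spectrum produced by the obstruction theory carries no differentials or extension problems in its $\ZZ_p^\times$-homotopy fixed point spectral sequence, which follows from evenness together with $\ZZ_p^\times$ being of cohomological dimension one at $p$ --- the same collapsing phenomenon noted after Proposition~\ref{prop:obstruction_groups}.
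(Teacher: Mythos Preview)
Your proposal is correct and follows essentially the same route as the paper: invoke Proposition~\ref{prop:structures}, kill the obstruction groups for existence and uniqueness via Proposition~\ref{prop:obstruction_groups} (since $t+1,t+2\geqslant2$), and then recover $\pi_*E(X,L)$ from $\K_*E(X,L)$ by $\ZZ_p^\times$-descent using evenness, cohomological dimension one, and the Galois property of $T_L\to S_L$. The only cosmetic difference is that the paper phrases the last step via the $\K(1)$-local fibration $\mathrm{S}^0\to\K\to\K$ rather than the equivalent homotopy fixed point formulation $E\simeq(\K\wedge E)^{h\ZZ_p^\times}$, and it records explicitly that $\pi_{2n-1}E(X,L)\cong\H^1(\ZZ_p^\times,B_{2n})=0$, which you leave implicit in ``even periodic''.
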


\begin{proof} 
  By Proposition~\ref{prop:structures},
  the obstructions for existence lie in the groups
  \begin{displaymath}
    \D^{t+2,t}_{\uptheta\Alg/\K_*}(B_*/\K_*,B_*)
  \end{displaymath}
  for~$t\geqslant 1$. These vanish by 
  Proposition~\ref{prop:obstruction_groups}.
  
  Similarly, the obstructions for uniqueness lie in the groups
  \begin{displaymath}
    \D^{t+1,t}_{\uptheta\Alg/\K_*}(B_*/\K_*,B_*)
  \end{displaymath}
  for~$t\geqslant 1$. These vanish by 
  Proposition~\ref{prop:obstruction_groups} as well.
  
  The~$\K$-homology together with the Adams operations determine the homotopy
groups of~$E(X,L)$ in the sense that there is a spectral
sequence
\begin{displaymath}
	\pi_{t-s}E(X,L)\Longleftarrow\H^s(\ZZ_p^\times,B_t)
\end{displaymath}
converging to them. As~$p$ is odd, the cohomological dimension
of~$\ZZ_p^\times$ is~$1$, and the spectral sequence degenerates to the
long exact sequence induced by the~$\K(1)$-local fibration
\begin{displaymath}
	\mathrm{S}^0\longrightarrow\K\stackrel{\uppsi^g-\id}{\longrightarrow}\K,
\end{displaymath}
where~$g$ is a topological generator of~$\ZZ_p^\times$.
As~$B_*$ is concentrated in even degrees, this implies
\begin{displaymath}
	\pi_{2n}E(X,L)\cong\H^0(\ZZ_p^\times,B_{2n})\quad\text{and}\quad
	\pi_{2n-1}E(X,L)\cong\H^1(\ZZ_p^\times,B_{2n}).
\end{displaymath}
And as~$B_*$ is Galois over~$A_*$, this implies 
\begin{displaymath}
  \pi_{2n}E(X,L)\cong (B_{2n})^{\ZZ_p^\times}\cong A_{2n}=\H^0(S_L,\omega^{\otimes n})
\end{displaymath}
as well as~$\pi_{2n-1}E(X,L)=0$.
\end{proof}

Let us now turn our attention to the possible equivalences of~$E(X,L)$.


\section{Symmetries}\label{sec:symmetries}

Let~$(X,L)$ be an ordinary~$p$-primitively polarized K3 surface as
before. In this section we will investigate how the action of the
automorphism group of~$(X,L)$ on its universal deformation can be
lifted into the world of brave new rings. This is the K3 analogue of
the question settled by Hopkins-Miller in the Lubin-Tate context.


\subsection{Symmetries of K3 surfaces}

Although K3 surfaces have no infinitesimal automorphisms, the group of
automorphisms may nevertheless be infinite. However, the subgroup
preserving a chosen polarization is always finite. This is one reason
to work with polarized K3 surfaces.

A glance into~\cite{mukai} and~\cite{dolgachev+keum} reveals that
there are many simple groups (in the technical sense) which act on K3
surfaces. They cannot be detected by means of the associated formal
Brauer groups, as the finite subgroups of automorphism groups of
formal groups are rather restricted, see~\cite{hewett}.  This is
another argument to tackle K3 surfaces from a crystalline perspective,
due to the following result.

\begin{theorem}{\upshape (\cite{ogus:supersingular}, 2.5, \cite{berthelot+ogus}, 3.23)}\label{thm:ogus}
	If~$p$ is odd, the map
	\begin{displaymath}
		\Aut(X)\longrightarrow\Aut(\Hcris^2(X/W))
	\end{displaymath}
	is injective.
\end{theorem}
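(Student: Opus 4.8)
The plan is to reduce the injectivity of $\Aut(X)\to\Aut(\Hcris^2(X/W))$ to a statement about automorphisms acting trivially on $\ell$-adic or de Rham cohomology, and then to invoke a Torelli-type rigidity fact for K3 surfaces. First I would observe that it suffices to show: if $\sigma\in\Aut(X)$ acts trivially on $\Hcris^2(X/W)$, then $\sigma=\id$. Reducing modulo $p$, such a $\sigma$ acts trivially on the crystalline cohomology of $X$ over $k$, hence on $\H^2_{\mathrm{dR}}(X/k)$ and, compatibly, on the Hodge filtration; in particular it acts trivially on $\H^0(X,\Omega^2_{X/k})$, so $\sigma$ preserves a nonzero global $2$-form up to the trivial scalar.

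The key step will be to transport this to a statement over a field of characteristic zero, where classical Torelli is available, or else to argue directly. One clean route: lift $X$ together with $\sigma$ to characteristic zero. Since K3 surfaces are unobstructed (the deformation space $S$ is formally smooth over $W$ by the first theorem quoted above), and since the locus in $S$ where $\sigma$ deforms is cut out by the fixed points of the induced action on $S$, one can find a lift $\calX$ over a finite extension of $W$ carrying an automorphism $\tilde\sigma$ reducing to $\sigma$; crystalline cohomology being canonically $\H^2_{\mathrm{dR}}(\calX)$, the hypothesis forces $\tilde\sigma$ to act trivially on $\H^2$ of the generic fiber, with $\ZZ$-coefficients after comparison with Betti cohomology. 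The Torelli theorem for complex K3 surfaces (an automorphism acting as the identity on $\H^2(X,\ZZ)$ and preserving the orientation of the positive cone is the identity) then gives $\tilde\sigma=\id$, hence $\sigma=\id$. The automatic preservation of the Kähler (ample) cone here comes from the fact that $\sigma$ fixes the polarization class, or more simply from fixing all of $\H^2$.

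The main obstacle I expect is making the lifting argument honest: one must check that an automorphism of the special fiber acting trivially on cohomology really does lift — equivalently, that the obstruction to lifting $\sigma$ together with a chosen lift of $X$ vanishes. This is exactly where the triviality of the crystalline action is used a second time: $\sigma$ acts trivially on $\H^1(X,\mathcal{T}_X)\cong\H^1(X,\Omega^1_{X/k})$ (via the $2$-form trivializing $\mathcal{T}_X\cong\Omega^1_X$, and the triviality on the Hodge piece $\H^{1,1}$), so it acts trivially on the tangent space $S$ of the deformation functor, whence every lift of $X$ is $\sigma$-stable and $\sigma$ lifts canonically. An alternative to the whole lifting argument is to cite the crystalline Torelli theorem of Ogus directly — indeed the second citation, $\cite{ogus:supersingular}$, 2.5, together with $\cite{berthelot+ogus}$, 3.23, is precisely the statement needed, so in the write-up I would present the deformation-theoretic reduction as motivation and then appeal to these references for the core injectivity, noting that the oddness of $p$ enters through the comparison theorems and the good behavior of the crystalline period map in that range.
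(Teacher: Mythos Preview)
The paper does not prove this theorem at all: it is stated with attributions to \cite{ogus:supersingular}, 2.5, and \cite{berthelot+ogus}, 3.23, and then used as a black box. So there is nothing to compare your argument against in the paper itself; your fallback option of simply citing those references \emph{is} the paper's ``proof''.

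Your lifting sketch, however, has a genuine gap. The implication ``$\sigma$ acts trivially on $\H^1(X,\mathcal{T}_X)$, hence $\sigma$ acts trivially on the formal deformation space $S$ and lifts to every deformation'' relies on linearizing the $\sigma$-action on $S$ near its fixed point. That linearization (Cartan's lemma) needs the order of $\sigma$ to be prime to~$p$: over~$k$ there are finite-order automorphisms of $k[[t]]$ with trivial differential, e.g.\ $t\mapsto t/(1-t)$, which has order~$p$. You therefore cannot conclude that wild automorphisms lift, and the paper itself points out (Theorem~\ref{thm:dolgachev+keum}) that wild automorphisms of K3 surfaces do occur for $p\leqslant 11$. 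So for those primes your reduction to the complex Torelli theorem does not go through as written. The arguments in the cited references avoid this by working directly with the crystalline cohomology rather than by lifting, which is where the hypothesis $p$ odd is actually used.
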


While the classification of finite groups of symmetries of {\it
  complex} K3 surfaces has been worked out some time ago,
see~\cite{nikulin} and~\cite{mukai}, the situation in positive
characteristic is more complicated, partially due to the existence of
wild automorphisms: an automorphism of a K3 surface in
characteristic~$p$ is called~{\it wild} if~$p$ divides its
order. Similarly, a group of automorphisms is wild if it contains a
wild automorphism; otherwise it is~{\it tame}.

\begin{theorem}{\upshape (\cite{dolgachev+keum}, 2.1)}\label{thm:dolgachev+keum}
  If~$p>11$, the automorphism group of a K3 surface in posi\-tive
  characteristic~$p$ is tame.
\end{theorem}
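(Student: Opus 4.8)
The plan is to show that for $p>11$ the group $\Aut(X)$ has no element of order $p$; since an element of order $p^am$ with $a\geqslant1$ and $p\nmid m$ has a power of order exactly $p$, this is equivalent to tameness. So suppose $g\in\Aut(X)$ has order exactly $p$. A first reduction: $g$ acts on the line $\H^0(X,\Omega^2_{X/k})$ by a $p$-th root of unity in $k^\times$, which must be $1$ because $\operatorname{char}k=p$, so $g$ fixes the canonical $2$-form — it is \emph{symplectic}. The task becomes to rule out symplectic automorphisms of order $p$ when $p>11$.

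I would first dispose of the larger primes using crystalline cohomology. The map $g^*$ on $\Hcris^2(X/W)$, free of rank $22$ over $W$, is $W$-linear with $(g^*)^p=\id$; after inverting $p$ the cyclotomic polynomial $\Phi_p$ remains irreducible over the unramified field $W[1/p]$, so $\Hcris^2(X/W)\otimes W[1/p]$ splits as the $g^*$-fixed subspace, of some dimension $d_0$, plus $k$ copies of $W[1/p](\zeta_p)$, whence $22=d_0+(p-1)k$. Averaging an ample line bundle over $\langle g\rangle$ produces a nonzero $g$-invariant class, so $d_0\geqslant1$; hence for $p\geqslant23$ one is forced into $k=0$ and $d_0=22$, so $g^*$ is the identity on $\Hcris^2(X/W)$ and $g=\id$ by Theorem~\ref{thm:ogus}, a contradiction. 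This leaves only $p\in\{13,17,19\}$.

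For these I would pass to the quotient $\pi\colon X\to Y=X/\langle g\rangle$. The fixed locus $X^g$ is nonempty — otherwise the Lefschetz trace formula gives $0=2+\operatorname{tr}(g^*\mid\H^2_{\et})=2+d_0-k$, which with $22=d_0+(p-1)k$ yields $pk=24$, impossible. Granting (from the next step) that $X^g$ is then a finite set of $n$ points, freeness of the action off $X^g$ gives $24=e(X)=(1-p)n+p\,e(Y)$ for $\ell$-adic Euler characteristics; and since $g$ is symplectic the $2$-form descends, $Y$ has at worst rational singularities over the $n$ fixed points, and its minimal resolution $\tilde Y$ is a minimal surface with $h^1(\O_{\tilde Y})=0$ and trivial or anti-effective canonical class — a K3 surface or a rational one. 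With $r_i\geqslant1$ the number of exceptional curves over the $i$-th singular point, $e(\tilde Y)=e(Y)+\sum_i r_i$; in the K3 case this reads $(p-1)(24-n)=p\sum_i r_i$, forcing $p\mid 24-n$ and, since $\sum_ir_i\geqslant n$, the bound $n\leqslant 24(p-1)/(2p-1)<12$. Combining these with the explicit list of admissible local resolution graphs — which also pins down $d_0$, $k$, the exceptional self-intersections and hence $\rho(\tilde Y)\leqslant22$ — should eliminate every remaining case for $p\in\{13,17,19\}$, the rational-surface alternative being killed by the same accounting applied to the Picard number.

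The hard part is the local analysis feeding the previous step: classifying up to conjugacy, via Artin–Schreier theory, the germs of order-$p$ symplectic automorphisms of a smooth surface in characteristic $p$, and computing for each the quotient singularity and the combinatorics of its minimal resolution (in particular establishing finiteness of $X^g$ for the primes in question). This is where the sharp value $p=11$ appears — genuine wild symplectic automorphisms exist precisely for $p\in\{2,3,5,7,11\}$ — and it is this classification, rather than the soft cohomological input (which handles $p\geqslant23$ effortlessly but is blind to $13,17,19$), that carries the real weight of the theorem.
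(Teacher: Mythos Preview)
The paper does not supply its own proof of this statement: it is quoted verbatim from Dolgachev--Keum with a bare citation, followed only by the remark that their bound is sharp. So there is nothing in the paper to compare against, and your sketch should be read against the cited source rather than against this text.

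That said, your outline tracks the Dolgachev--Keum strategy closely. The crystalline step for~$p\geqslant23$ via the decomposition~$22=d_0+(p-1)k$ together with Theorem~\ref{thm:ogus} is correct and is indeed how one disposes of the large primes cheaply. For~$p\in\{13,17,19\}$ you rightly locate the substance in the local Artin--Schreier classification of order-$p$ symplectic germs and the resolution combinatorics of their (wild) quotient singularities; this is precisely where Dolgachev--Keum invest their effort. One caution worth flagging: in the wild setting the quotient singularities are not the familiar tame cyclic ones, and several of the inputs you use for the intermediate primes --- finiteness of~$X^g$, the shape of the Euler-characteristic identity, rationality of the singularities, and the K3-or-rational dichotomy for~$\tilde Y$ --- already presuppose that local analysis rather than being independent of it. You acknowledge this at the end, so the proposal is honest about being a strategy for~$13,17,19$ rather than a proof; just be aware that the dependencies run a bit deeper than the ordering of your paragraphs suggests.
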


The authors also show by means of examples that their bound is
sharp. It seems a remarkable coincidence that this bound is~$1$ larger
than the bound~$h\leqslant10$ for the height of the formal Brauer
group (if finite); the same happens in the case of elliptic curves.


\subsection{Existence and classification of~$\Eoo$ maps}

Let~$(X,L)$ be an ordinary~$p$-primitively polarized K3 surface as
before.  We have already seen, in Theorem~\ref{thm:Eoo_structure},
that there is an~$\Eoo$ structure on the K3 spectrum~$E(X,L)$ which is
unique up to equivalence. While this means that two different models
will be equivalent, there may be many different equivalences between
them. We will now see that automorphisms of~$(X,L)$ give rise
to~$\Eoo$ automorphisms of~$E(X,L)$.

\begin{proposition}\label{prop:Eoo_maps}
	The Hurewicz map
	\begin{displaymath}
	\pi_0\calEoo(E(X,L),E(X,L))
	\longrightarrow
	\operatorname{Hom}_{\uptheta\Alg/\K_*}(\K_*E(X,L),\K_*E(X,L))
	\end{displaymath}
	is bijective.
\end{proposition}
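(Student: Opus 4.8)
The plan is to apply the Goerss-Hopkins obstruction theory for spaces of $\Eoo$ maps, as recalled in Propositions~\ref{prop:maps-existence} and~\ref{prop:maps-uniqueness}, with $E=F=E(X,L)$. Surjectivity of the Hurewicz map amounts to the statement that every $\uptheta$-algebra endomorphism $d_*$ of $\K_*E(X,L)=B_*$ over $\K_*$ can be realized by an $\Eoo$ self-map. By Proposition~\ref{prop:maps-existence}, the obstructions to this realization lie in the groups $\D^{t+1,t}_{\uptheta\Alg/\K_*}(B_*/\K_*,B_*)$ for $t\geqslant1$, and these vanish by Proposition~\ref{prop:obstruction_groups}, since $t+1\geqslant2$. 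Hence the map is surjective.

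For injectivity, suppose two $\Eoo$ self-maps $g,g'$ of $E(X,L)$ induce the same map $d_*$ on $\K$-homology. Then $g$ and $g'$ lie in the same component of $\calEoo(E(X,L),E(X,L))$ if and only if they are connected by a path, and the obstruction theory for uniqueness (the groups $\D^{t,t}_{\uptheta\Alg/\K_*}(B_*/\K_*,B_*)$ for $t\geqslant1$, cf.\ the discussion following Proposition~\ref{prop:maps-existence}) again vanishes for $t\geqslant2$ by Proposition~\ref{prop:obstruction_groups}. The only possibly non-zero obstruction group is thus in degree $t=1$, which governs the difference between $g$ and $g'$ only up to the action of the relevant derivation group; but what we actually want is that $g$ and $g'$ lie in the \emph{same path component}, i.e.\ that $\pi_0$ is not larger than $\Hom_{\uptheta\Alg/\K_*}(B_*,B_*)$. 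This is where one uses the convergent spectral sequence of Proposition~\ref{prop:maps-uniqueness}: its $E_2$-term $\D^{s,t}_{\uptheta\Alg/\K_*}(B_*/\K_*,B_*)$ converges to $\pi_{t-s}\calEoo(E(X,L),E(X,L))_g$, and by Proposition~\ref{prop:obstruction_groups} all contributions with $s\geqslant2$ vanish; the $\pi_0$ of the mapping space is then computed by the edge of the spectral sequence at $s=t$, and the relevant term $\D^{t,t}$ vanishes for $t\geqslant2$, while the $t=0$ term records exactly the set of $\uptheta$-algebra maps. The same degeneration argument that appears in the proof of Theorem~\ref{thm:Eoo_structure} — using that $p$ is odd, so $\ZZ_p^\times$ has cohomological dimension $1$, and that $B_*$ is concentrated in even degrees and Galois over $A_*$ — shows there are no hidden extension problems, so $\pi_0\calEoo(E(X,L),E(X,L))_g$ is a single point for each $g$.

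I expect the main subtlety to be bookkeeping around the indexing and the role of the $t=1$ obstruction group, which does not vanish a priori: one must be careful that this group contributes only to $\pi_0$ of mapping \emph{spaces} via paths between maps inducing the same $d_*$ and not to the set of realizable $d_*$ itself, and that the even concentration of $B_*$ kills any odd-degree interference. Concretely, the cleanest route is to invoke Proposition~\ref{prop:maps-uniqueness} directly: the Hurewicz map in question is the edge homomorphism $\pi_0\calEoo(E(X,L),E(X,L))\to \D^{0,0}_{\uptheta\Alg/\K_*}(B_*/\K_*,B_*)=\Hom_{\uptheta\Alg/\K_*}(B_*,B_*)$ at the $\pi_0$ level, summed over components, and the vanishing of $\D^{s,t}$ for $s\geqslant2$ (Proposition~\ref{prop:obstruction_groups}) together with the cohomological-dimension-one degeneration forces this edge map to be an isomorphism on $\pi_0$. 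Thus both surjectivity and injectivity follow from a single application of the obstruction machinery, with Proposition~\ref{prop:obstruction_groups} supplying all the needed vanishing.
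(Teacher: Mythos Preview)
Your surjectivity argument is correct and matches the paper's. The gap is in the injectivity step: you correctly isolate~$\D^{1,1}_{\uptheta\Alg/\K_*}(B_*/\K_*,B_*)$ as the only potentially nonzero obstruction, but you never actually show it vanishes. The appeal to the cohomological dimension of~$\ZZ_p^\times$ and the degeneration argument from Theorem~\ref{thm:Eoo_structure} is misplaced: that argument is about the descent spectral sequence computing~$\pi_*E(X,L)$ from~$\K_*E(X,L)$, not about the Goerss--Hopkins spectral sequence for mapping spaces. If~$\D^{1,1}$ were genuinely nonzero it would sit on the~$t=s$ diagonal and contribute to~$\pi_0$ of the mapping space, so no amount of edge-map reasoning would make the Hurewicz map injective.

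The paper closes this gap by a direct computation. The internal grading~$t=1$ means the coefficient module is a shift: one has
\[
  \D^{1,1}_{\uptheta\Alg/\K_*}(B_*/\K_*,B_*)\;\cong\;
  \Ext^1_{A_*[\uptheta]}(\Omega_{A_*/\K_*},\Sigma^{-1}A_*),
\]
using the same reductions as in Proposition~\ref{prop:obstruction_groups}. The two-term free resolution there shows this~$\Ext^1$ is a quotient of~$\operatorname{Hom}_{A_*}(\Omega_{A_*/\K_*},\Sigma^{-1}A_*)$. Since~$A_*$ is concentrated in even degrees,~$\Sigma^{-1}A_*$ lives entirely in odd degrees, and this Hom group is already zero. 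Your phrase ``even concentration of~$B_*$ kills any odd-degree interference'' is the right intuition, but it needs to be made precise exactly in this way; without identifying the shifted coefficients and invoking the parity, the argument is incomplete.
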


\begin{proof}
	Let us abbreviate~$A_*=\pi_*E(X,L)$ and~$B_*=\K_*E(X,L)$ as before.

  By Proposition~\ref{prop:maps-existence},
  the obstructions to surjectivity of the Hurewicz map lie in the groups
  \begin{displaymath}
    \D^{t+1,t}_{\uptheta\Alg/\K_*}(B_*/\K_*,B_*)
  \end{displaymath}
  for~$t\geqslant 1$. These vanish by 
  Proposition~\ref{prop:obstruction_groups}.
  
  Similarly, by Proposition~\ref{prop:maps-uniqueness}, 
  the obstructions for uniqueness lie in the groups
  \begin{displaymath}
    \D^{t,t}_{\uptheta\Alg/\K_*}(B_*/\K_*,B_*)
  \end{displaymath}
  for~$t\geqslant 1$. These vanish by 
  Proposition~\ref{prop:obstruction_groups} except possibly for 
  the group
  \begin{displaymath}
    \D^{1,1}_{\uptheta\Alg/\K_*}(B_*/\K_*,B_*)
    \cong
    \Ext^1_{A_*[\uptheta]}(\Omega_{A_*/\K_*},\Sigma^{-1}A_*).
  \end{displaymath}
  The last steps in the proof of
  Proposition~\ref{prop:obstruction_groups} have shown that this group
  can be computed as the cokernel of an endomorphism of
	\begin{equation}\label{eq:zero_group}
  	\operatorname{Hom}_{A_*[\uptheta]}(A_*[\uptheta]\otimes_{A_*}\Omega_{A_*/\K_*},\Sigma^{-1}A_*)
	\cong
	\operatorname{Hom}_{A_*}(\Omega_{A_*/\K_*},\Sigma^{-1}A_*).
\end{equation}
However, as~$A_*$ is concentrated in even degrees,~$\Sigma^{-1}A_*$ is
concentrated in odd degrees. It follows that the
group~\eqref{eq:zero_group} itself is already zero.
\end{proof}

As a consequence of the previous proposition, in order to define
homotopy classes of~$\Eoo$ maps on~$E(X,L)$, we merely need to guess
their effect in~$\K$-homology. As with the~$\K$-homology of~$E(X,L)$
itself, the geometry of~$(X,L)$ provide us with the required
information. Here, we may use the fact that the automorphism
group~$\Aut(X,L)$ acts on the universal formal deformation by changing
the identification of the special fiber with~$(X,L)$. As automorphisms
of K3 surfaces are rigid, this can also be understood as follows: an
automorphism~$g$ of~$(X,L)$ sends an~$A$-point~$s$ of~$S_L$ to~$s'$
if~$g$ extends to an isomorphism between the deformations~$(X_s,L_s)$
and~$(X_{s'},L_{s'})$ corresponding to~$s$ and~$s'$. Either way, the
action of~$\Aut(X,L)$ on~$S_L$ can be extended to~$T_L$ as follows. As
the map~\hbox{$T_L\rightarrow S_L$} should be equivariant, we only
need to consider two~$A$-points~$t$ and~$t'$ over~$s$ and~$s'$
where~\hbox{$gs=s'$} as above. Then~$gt=t'$ holds for the action
on~$T_L$ if the extension of~$g$ is compatible with the
chosen~$a$-parts.

\begin{proposition}\label{prop:compatibility}
  The action of~$\Aut(X,L)$ on~$T_L$ respects the structure of
  a~$\uptheta$-algebra with Adams operations on~$\O(T_L)$ defined in
  Section~\ref{sec:theta}, so that there is a factorization
\begin{displaymath}
	\Aut(X,L)\longrightarrow
	\Aut_{\uptheta\Alg/\K_*}(\K_*E(X,L))\stackrel{\subseteq}{\longrightarrow}
	\Aut(\O(T_L))
\end{displaymath}
of this action through the corresponding subgroup.
\end{proposition}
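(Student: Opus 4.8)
The claim is that the geometric action of $\Aut(X,L)$ on $T_L$ (and hence on $\O(T_L)$, and more generally on the graded ring $(\H^0(T_L,\omega^{\otimes n}))_n$) is compatible with the $\uptheta$-algebra-with-Adams-operations structure built in Section~\ref{sec:theta}. Once this is known, the factorization through $\Aut_{\uptheta\Alg/\K_*}(\K_*E(X,L))$ is automatic, since that subgroup of $\Aut(\O(T_L))$ is by definition the subgroup respecting precisely this structure. So the whole content is the compatibility of the $\Aut(X,L)$-action with (i) the Adams operations $\uppsi^k$ for $k\in\ZZ_p^\times$ and (ii) the operator $\uptheta$ (equivalently $\uppsican$).

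The plan is to prove (i) and (ii) separately, and in both cases to reduce the statement to the functoriality of the crystalline/de Rham constructions that produced these operations in the first place. For (i): the $\uppsi^k$ were defined as the residual action of $\Aut(\GGmhat)\cong\ZZ_p^\times$ on the $\ZZ_p^\times$-torsor $T_L\to S_L$, i.e. as deck transformations of the Galois cover. The key point is that the $\Aut(X,L)$-action on $T_L$ lifts the $\Aut(X,L)$-action on $S_L$, and a lift of a given map along a $\ZZ_p^\times$-torsor is unique up to the torsor action; concretely, $g$ and $\uppsi^k$ act on the two different "factors" of the situation — $g$ permutes deformations $(X_s,L_s)$ while $\uppsi^k$ rescales the chosen $a$-part — and these commute because rescaling $a$ by $\alpha\in\ZZ_p^\times$ is compatible with any isomorphism of deformations (the isomorphism $g\colon(X_s,L_s)\to(X_{s'},L_{s'})$ sends an $a$-part to an $a$-part, intertwining the $\alpha$-rescalings). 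So the $\Aut(X,L)$-action is $\ZZ_p^\times$-equivariant, which is exactly what it means to commute with all $\uppsi^k$.

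For (ii): recall $\uptheta$ is determined by $\uppsican$ via $\uppsican(f)=f^p+p\,\uptheta(f)$ and $p$-torsion-freeness of $\O(T_L)$; so it suffices to show the $\Aut(X,L)$-action commutes with $\uppsican$ on $\O(T_L)$. On $S_L$ this is the statement that $\uppsican$ — being intrinsic (the unique lift of Frobenius preserving the Hodge filtration, by \cite{katz:appendix}, A4.1, cited after Theorem~\ref{thm:c1}) — is natural in $(X,L)$, hence commutes with the action of any automorphism. On $T_L$ one must further check that the extension of $\uppsican$ to $T_L$, which was built from the map $\uppsican^*\colon\HdR^2(\calX/S_L)\to\HdR^2(\uppsican^*\calX/S_L)$ sending the universal $a$ to $\uppsican^*a$, is compatible with the $\Aut(X,L)$-action on $a$-parts; this again follows from the functoriality of de Rham cohomology applied to the isomorphism of deformations induced by $g$, together with the fact that $g$ intertwines the Frobenius lifts downstairs. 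The case of $\H^0(T_L,\omega^{\otimes n})$ for general $n$ is the same argument, since the $\uptheta$-algebra structure there was produced ``by the same argument'' (as the excerpt says in Section~\ref{sec:theta}) and $\omega$ is functorial in $(X,L)$.

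The main obstacle is bookkeeping rather than anything deep: one must be careful that the two liftings — the lift of the $g$-action from $S_L$ to $T_L$, and the extension of $\uppsican$ from $S_L$ to $T_L$ — are defined via genuinely compatible choices, i.e. that "$g$ preserves $a$-parts" and "$\uppsican^*$ of an $a$-part is an $a$-part" are used consistently. The cleanest way to organize this is to phrase everything functorially: the assignment $(X,L)\mapsto(\O(T_L),\uppsi^\bullet,\uptheta)$ is a functor on the groupoid of ordinary $p$-primitively polarized K3 surfaces, because each piece of structure was extracted from crystalline cohomology by a functorial recipe; then $\Aut(X,L)$ acts through this functor, and the desired compatibility is immediate. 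With that framing the proof is short; the work is just in checking that Section~\ref{sec:theta}'s constructions are indeed functorial, which they are by inspection since they invoke only functoriality of $\HdR^2$, the intrinsic Katz lift, and the uniqueness forced by $p$-torsion-freeness.
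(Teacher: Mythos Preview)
Your plan is correct and follows essentially the same approach as the paper's proof: compatibility with the Adams operations comes from the $\ZZ_p^\times$-equivariance of $T_L\to S_L$, and compatibility with $\uppsican$ (hence $\uptheta$) comes from the intrinsic characterization of the Katz lift, which forces $g\,\uppsican\,g^{-1}=\uppsican$. The paper's proof is much terser---it reduces (ii) to $S_L$ in one sentence via equivariance---whereas you spell out the passage from $S_L$ to $T_L$ more carefully via functoriality of the $a$-part construction; this extra care is justified but not a different idea.
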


\begin{proof}
  The compatibility of the action of~$\Aut(X,L)$ with the Adams
  operations follows immediately from the fact that~$T_L\rightarrow
  S_L$ is equivariant. For the same reason we may check the
  compatibility with~$\uppsi^p$ (hence~$\uptheta$) on~$S_L$. But for
  every automorphism~$g$ of~$(X,L)$, the
  conjugate~$g\,\uppsican\,g^{-1}$ satisfies the characterization of
  the Katz lift, so that~$g\,\uppsican\,g^{-1}=\uppsican$. This shows
  that~$\uppsican$ is equivariant.
\end{proof}

\parbox{\linewidth}{\begin{theorem}
    For all ordinary~$p$-primitively polarized K3 surfaces~$(X,L)$,
    there is a unique homotopy action 
    \begin{displaymath}
      \Aut(X,L)\longrightarrow\Aut_{\mathrm{Ho}(\calEoo)}(E(X,L))
    \end{displaymath}
    of its automorphism group through~$\Eoo$ maps on the associated~$\Eoo$ ring spectrum.
  \end{theorem}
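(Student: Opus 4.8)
The argument is a formal consequence of Proposition~\ref{prop:Eoo_maps} together with Proposition~\ref{prop:compatibility}, and the plan is to assemble them as follows. First I would observe that $\pi_0\calEoo(E(X,L),E(X,L))$ is a monoid under composition and that the Hurewicz map of Proposition~\ref{prop:Eoo_maps} is a homomorphism of monoids, since passing to $\K$-homology is functorial. It therefore restricts to an isomorphism on the groups of units. On the right these units are $\Aut_{\uptheta\Alg/\K_*}(\K_*E(X,L))$; on the left they are precisely $\Aut_{\Ho(\calEoo)}(E(X,L))$, the invertible elements of $\Ho(\calEoo)(E(X,L),E(X,L))$. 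Indeed, if $g\colon E(X,L)\to E(X,L)$ is an~$\Eoo$ map with $\K_*g$ invertible, the inverse $\uptheta$-algebra automorphism is realized by some~$\Eoo$ map~$h$ by the surjectivity part of Proposition~\ref{prop:Eoo_maps}, and then $\K_*(hg)=\id=\K_*(gh)$ forces $hg\simeq\id\simeq gh$ in $\Ho(\calEoo)$ by the injectivity part. Thus one obtains a natural isomorphism
\[
  \Phi\colon\Aut_{\Ho(\calEoo)}(E(X,L))\;\xrightarrow{\ \sim\ }\;\Aut_{\uptheta\Alg/\K_*}(\K_*E(X,L)).
\]

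Next I would feed in Proposition~\ref{prop:compatibility}, which provides a group homomorphism
\[
  \rho_0\colon\Aut(X,L)\longrightarrow\Aut_{\uptheta\Alg/\K_*}(\K_*E(X,L))
\]
arising from the geometric action of~$\Aut(X,L)$ on~$T_L$. The composite $\rho=\Phi^{-1}\circ\rho_0$ is then the desired homotopy action $\Aut(X,L)\to\Aut_{\Ho(\calEoo)}(E(X,L))$. By construction it induces on~$\K$-homology the natural action on~$\O(T_L)$, and by Theorem~\ref{thm:ogus} (faithfulness of the action on the Hodge F-crystal over~$S_L$, hence on its specialization~$\Hcris^2(X/W)$) it is in fact injective, although injectivity is not part of the assertion.

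For the uniqueness clause the essential point is simply that~$\Phi$ is a bijection. A homotopy action of~$\Aut(X,L)$ \emph{of its automorphism group} is, by definition, one that induces the geometric action on~$\K$-homology, i.e.\ one whose composite with~$\Phi$ equals~$\rho_0$; since~$\Phi$ is an isomorphism, any such homotopy action is forced to equal $\Phi^{-1}\circ\rho_0=\rho$.

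I do not expect a genuine obstacle here: the hard work has already been done in the vanishing of the obstruction groups (Proposition~\ref{prop:obstruction_groups}) and its consequence Proposition~\ref{prop:Eoo_maps}. The only points needing a little care are purely formal: checking that the bijection of Proposition~\ref{prop:Eoo_maps} respects composition, so that it passes to an isomorphism of automorphism \emph{groups}, and making precise the meaning of ``unique homotopy action \emph{of its automorphism group}'', namely uniqueness among homotopy actions inducing the natural~$\uptheta$-algebra action on $\K_*E(X,L)\cong\O(T_L)$.
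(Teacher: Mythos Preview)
Your proposal is correct and follows essentially the same route as the paper: combine Proposition~\ref{prop:Eoo_maps} (the Hurewicz bijection) with Proposition~\ref{prop:compatibility} (the geometric action on $\K_*E(X,L)$) to produce and pin down the homotopy action. The paper's proof is a two-line invocation of these results, whereas you have spelled out the formal steps the paper leaves implicit---that the Hurewicz bijection is a monoid map and hence restricts to an isomorphism on units, and what ``unique'' should mean---so your write-up is in fact a fuller version of the same argument.
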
}

\begin{proof}
  The theorem is a consequence of Proposition~\ref{prop:Eoo_maps},
  which implies that the right hand side is isomorphic
  to~$\Aut_{\uptheta\Alg/\K_*}(\K_*E(X,L))$, and
  Proposition~\ref{prop:compatibility}, which provides for the
  required action on~$\K_*E(X,L)$.
\end{proof}


\subsection{Rigidification}

There is another obstruction theory to decide when a homotopy action
of a group can be rigidified to a topological action, see~\cite{cooke}
in the context of topological spaces. This can be used to prove the
following result.

\begin{theorem}\label{thm:tame}
  If the automorphism group of a~$p$-primitively polarized K3
  surface~\hbox{$(X,L)$} is tame, it acts through~$\Eoo$ maps on the
  associated~$\Eoo$ ring spectrum~$E(X,L)$.
\end{theorem}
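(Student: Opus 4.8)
The plan is to invoke Cooke's obstruction theory for rigidifying a homotopy action of a discrete group $G=\Aut(X,L)$ to an honest (topological) action, applied to the space of $\Eoo$ self-maps of $E(X,L)$. The homotopy action has already been produced in the previous theorem, so what remains is to show that the relevant obstructions, which live in the group cohomology of $G$ with coefficients in the homotopy groups of the relevant mapping spaces, vanish; this is exactly where the hypothesis that $G$ is tame will be used, since tameness means $p\nmid|G|$, and then $\H^*(G;M)$ vanishes in positive degrees whenever $M$ is a $p$-complete (or at least uniquely $p$-divisible-after-inverting-$|G|$) module. So the strategy splits into: (i) identify the coefficient modules that appear in Cooke's spectral sequence/obstruction tower; (ii) check these are modules on which $1/|G|$ acts, so that the higher group cohomology is killed; (iii) conclude that the homotopy action rigidifies.

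First I would recall the precise form of Cooke's result: given a discrete group $G$ and a homotopy action on a space $Y$ (here $Y$ will be the relevant component of $\calEoo(E(X,L),E(X,L))$, or rather the classifying-space data it determines), the obstructions to lifting the map $BG\to B\Aut_{\Ho}(Y)$ to a map into the actual $B\Aut(Y)$ (equivalently, to a genuine action) lie in $\H^{n+1}(G;\pi_n Y)$ for $n\geqslant1$, with the indeterminacy of successive choices measured by $\H^n(G;\pi_n Y)$. In the case at hand the homotopy groups in question are those of the mapping space $\calEoo(E(X,L),E(X,L))$ based at the relevant self-equivalence, and by Proposition~\ref{prop:maps-uniqueness} these are computed (in the degenerate range, after the calculation in Proposition~\ref{prop:obstruction_groups}) by the groups $\D^{t-s,\ast}_{\uptheta\Alg/\K_*}(B_*/\K_*,\Sigma^{?}B_*)$, i.e.\ by $\Ext$-groups over $A_*[\uptheta]$ into (shifts and twists of) $A_*$. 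The key structural point is that all of these coefficient groups are finitely generated $\ZZ_p$-modules, in fact built out of $A_* = \H^0(S_L,\omega^{\otimes\ast})$ which is a power series ring over $W$, and the $G$-action on them is the one induced from the geometric action on $S_L$ and $T_L$.

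Second, the crucial observation: since $|G|$ is prime to $p$ by tameness, and every coefficient module $M$ appearing is a $\ZZ_p$-module (so multiplication by $|G|$, a $p$-adic unit, is invertible on $M$), the averaging/transfer argument shows $\H^{n}(G;M)=0$ for all $n\geqslant1$. Hence every obstruction group $\H^{n+1}(G;\pi_n Y)$ vanishes for $n\geqslant1$, and likewise the indeterminacy groups $\H^n(G;\pi_n Y)$ vanish for $n\geqslant1$, so that the homotopy action extends uniquely to a genuine action, i.e.\ $G$ acts through $\Eoo$ maps on $E(X,L)$. One must be slightly careful at the bottom of the tower, $n=0$ and $n=1$: the $n=0$ datum is exactly the homotopy action already in hand (a homomorphism $G\to\pi_0\calEoo(E(X,L),E(X,L))$, which is a group by Proposition~\ref{prop:Eoo_maps}), and the first genuine obstruction sits in $\H^2(G;\pi_1)$, which is covered by the vanishing above; there is no $\H^1(G;\pi_0)$ term to worry about because $\pi_0$ enters only through the homotopy action itself.

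The main obstacle, as I see it, is not any single hard estimate but rather making the bookkeeping of Cooke's obstruction theory line up cleanly with the Goerss--Hopkins mapping-space spectral sequence: one needs that the spaces whose homotopy groups serve as Cooke coefficients are precisely the components of $\calEoo(E(X,L),E(X,L))$ indexed by the homotopy action, that these are grouplike (monoid of self-equivalences), and that the $G$-module structure on their homotopy groups is the geometric one — all of which follow from naturality of the constructions in Section~\ref{sec:GoerssHopkins} together with Proposition~\ref{prop:compatibility}, but deserve to be spelled out. Once that identification is in place, the vanishing is immediate from tameness as above, and the theorem follows.

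\begin{proof}
  Apply Cooke's obstruction theory~\cite{cooke} to the discrete
  group~$G=\Aut(X,L)$ acting up to homotopy on the~$\Eoo$ ring
  spectrum~$E(X,L)$, as provided by the previous theorem. The
  obstructions to rigidifying this homotopy action to a genuine
  action, and the indeterminacies in the choices, lie in the group
  cohomology groups~$\H^{n+1}(G,\pi_n Y)$ and~$\H^n(G,\pi_n Y)$
  for~$n\geqslant1$, where~$Y$ runs through the relevant components of
  the spaces~$\calEoo(E(X,L),E(X,L))$ of~$\Eoo$ self-maps.

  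By Proposition~\ref{prop:maps-uniqueness} and the computation
  carried out in the proof of
  Proposition~\ref{prop:obstruction_groups}, the homotopy
  groups~$\pi_n Y$ are subquotients of~$\Ext$-groups of the form
  \begin{displaymath}
    \Ext^s_{A_*[\uptheta]}(\Omega_{A_*/\K_*},\Sigma^{-n}A_*),
  \end{displaymath}
  hence are finitely generated~$\ZZ_p$-modules, and the~$G$-action on
  them is induced by the geometric action of~$\Aut(X,L)$ on~$S_L$
  and~$T_L$ described above, via
  Proposition~\ref{prop:compatibility}.

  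As the automorphism group is tame, its order~$|G|$ is prime
  to~$p$. On any~$\ZZ_p$-module~$M$, multiplication by~$|G|$ is
  therefore an isomorphism, and the usual averaging argument
  shows~$\H^n(G,M)=0$ for all~$n\geqslant1$. Consequently all the
  obstruction groups~$\H^{n+1}(G,\pi_n Y)$ and all the
  indeterminacies~$\H^n(G,\pi_n Y)$ vanish for~$n\geqslant1$. The
  bottom datum of the tower is the homotopy action~$G\rightarrow
  \Aut_{\Ho(\calEoo)}(E(X,L))$ already constructed, and the first
  genuine obstruction lies in~$\H^2(G,\pi_1 Y)$, which is among the
  groups just shown to vanish. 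Hence the homotopy action lifts to a
  genuine action, so that~$\Aut(X,L)$ acts through~$\Eoo$ maps
  on~$E(X,L)$.
\end{proof}
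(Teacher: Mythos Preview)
Your proof is correct and follows essentially the same approach as the paper: invoke Cooke's obstruction theory~\cite{cooke}, observe that the coefficient modules are~$\ZZ_p$-modules, and use tameness to kill the higher group cohomology. The paper's version is more streamlined: rather than identifying the homotopy groups~$\pi_n Y$ via the Goerss--Hopkins spectral sequence, it simply notes that~$E(X,L)$ is~$p$-complete, which already forces the relevant homotopy groups to be~$\ZZ_p$-modules; your detour through the~$\Ext$-description is not wrong, but it is unnecessary for the vanishing argument. One small slip: the claim that these groups are \emph{finitely generated}~$\ZZ_p$-modules is not correct (recall~$A_0=\O(S_L)$ is a power series ring over~$W$), though this plays no role in the argument, since all that is used is that multiplication by~$|G|$ is invertible.
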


\begin{proof}
  The obstructions to rigidification lie in the groups
  \begin{displaymath}
    \H^n(\Aut(X,L),\pi_{n-2}\Aut_\id\!E(X,L)\,)
  \end{displaymath}
  for~$n\geqslant 3$, where~$\Aut_\id\!E(X,L)$ is the identity
  component of the derived space of selfequivalences
  of~$E(X,L)$. As~$E(X,L)$ is~$p$-complete, and~$p$ does not divide
  the order of~$\Aut(X,L)$ by assumption, these groups vanish.
\end{proof}

The proof shows that the rigidification is unique up to unique
equivalence under the given hypothesis. Also note that the action is
faithful by definition: it can be detected in~$\K$-homology.

\begin{corollary}\label{cor:11}
  If~$p>11$, the automorphism group of a~$p$-primitively polarized~K3
  surface~\hbox{$(X,L)$} acts through~$\Eoo$ maps on the
  associated~$\Eoo$ ring spectrum~$E(X,L)$.
\end{corollary}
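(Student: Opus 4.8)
The plan is to deduce this immediately by combining the two preceding results: Theorem~\ref{thm:dolgachev+keum} of Dolgachev and Keum, which bounds the characteristics in which wild automorphisms of K3 surfaces can occur, and Theorem~\ref{thm:tame}, which realizes any tame automorphism group through $\Eoo$ maps on $E(X,L)$.

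First I would invoke Theorem~\ref{thm:dolgachev+keum}: since $p>11$, the full automorphism group $\Aut(X)$ of the underlying K3 surface in characteristic $p$ is tame, so that no element of $\Aut(X)$ has order divisible by $p$. The group $\Aut(X,L)$ of automorphisms preserving the polarization is a subgroup of $\Aut(X)$ --- and a finite one, as recalled at the beginning of this section --- hence it too is tame, i.e.\ $p\nmid|\Aut(X,L)|$. With this, the hypothesis of Theorem~\ref{thm:tame} is met, and that theorem produces the asserted action through $\Eoo$ maps on $E(X,L)$. As noted after its proof, the action is moreover faithful and the rigidification is unique up to unique equivalence.

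There is no real obstacle left at this stage; all the work has been done beforehand. The only point deserving a moment's care is the passage from $\Aut(X)$ to the polarization-preserving subgroup $\Aut(X,L)$, but since a subgroup of a tame group is tame this is automatic, and the condition $p\nmid|\Aut(X,L)|$ needed to annihilate the obstruction groups $\H^n(\Aut(X,L),\pi_{n-2}\Aut_\id\!E(X,L))$ of Theorem~\ref{thm:tame} follows at once. One might additionally remark, as the surrounding text already does, that the bound $p>11$ is exactly one more than the bound $h\leqslant 10$ on the height of a finite formal Brauer group, paralleling the situation for elliptic curves.
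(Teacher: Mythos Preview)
Your proof is correct and follows essentially the same route as the paper's own one-line argument, namely combining Theorem~\ref{thm:dolgachev+keum} with Theorem~\ref{thm:tame}. Your explicit remark that one passes from $\Aut(X)$ to the subgroup $\Aut(X,L)$ to retain tameness is a reasonable clarification of a step the paper leaves implicit.
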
	

\begin{proof}
  This follows immediately from Theorem~\ref{thm:dolgachev+keum} and
  the previous result.
\end{proof}


\subsection{Examples: Fermat quartics}

Let us consider the Fermat quartics~$X$ defined
by~\hbox{$T^4_1+T^4_2+T^4_3+T^4_4$} in~$\mathbb{P}^3_k$ with the
polarization~$L=\O(1)$ given by its projective embedding. The Fermat
quartic over a field of odd characteristic~$p$ is known to be ordinary
if and only if~$p\equiv1$ modulo~$4$, see~\cite{artin}
and~\cite{artin+mazur}. The cases~\hbox{$p\geqslant13$} can be dealt
with using Corollary~\ref{cor:11}, and the case~$p=5$ can be dealt
with by hand: Since restriction induces an
isomorphism~$\H^0(\mathbb{P}^3_k,\O(1))\cong\H^0(X,L)$, every
automorphism of~$X$ which preserves the polarization~$L$ extends
uniquely over~$\mathbb{P}^3_k$. The subgroup of~$\mathrm{PGL}_4(k)$
which preserves the Fermat quartic has been determined by Oguiso,
see~\cite{shioda}, in the case~$p\not=3$. This shows that~$\Aut(X,L)$
is an extension of the symmetric group~$\Sigma_4$, which acts by
permutations of the coordinates, by the group of diagonal matrices
with~$4$-th roots of unity as entries. As this group does not contain
an element of order~$5$, Theorem~\ref{thm:tame} can be used to show
that the automorphism group of the Fermat quartic~$(X,L)$ acts
faithfully through~$\Eoo$ maps on the associated~$\Eoo$ ring
spectrum~$E(X,L)$ for all primes~$p$ where the Fermat quartic is
ordinary. The homotopy fixed point spectral sequence
\begin{displaymath}
  \H^s(\Aut(X,L),\H^0(S_L,\omega^{\otimes t/2}))\Longrightarrow \pi_{t-s}E(X,L)^{\mathrm{h}\!\Aut(X,L)}
\end{displaymath}
collapses to give
\begin{displaymath}
	 \pi_t(E(X,L)^{\mathrm{h}\!\Aut(X,L)})
	 \cong
	 \H^0(S_L,\omega^{\otimes t/2})^{\Aut(X,L)}
\end{displaymath}
in this and similar cases. 



\parbox{\linewidth}{%
Markus Szymik\newline
Fakult\"at f\"ur Mathematik\newline
Ruhr-Universit\"at Bochum\newline
44780 Bochum\newline
Germany\newline
\phantom{ }
\newline
+49-234-32-23216 (phone)\newline
+49-234-32-14750 (fax)
\newline
\phantom{ }
\newline
{markus.szymik@ruhr-uni-bochum.de}\hfill}

\end{document}